  \tikzstyle{block} = [rectangle, draw,
  \tikzstyle{line} = [draw, -latex']
\newtheorem{lemma}{Lemma}[section]
\newtheorem{prop}[lemma]{Proposition}
\newtheorem{claim*}{Claim}
\newtheorem{thm}[lemma]{Theorem}
\theoremstyle{definition}
\newtheorem*{theoremnon}{Main Theorem}
\DeclareMathOperator{\invr}{inv}
\newcommand{\cS}{\mathcal{S}}
\newcommand{\cF}{\mathcal{F}}
\newcommand{\cA}{\mathcal{A}}
\newcommand{\cG}{\mathcal{G}}
\newcommand{\cP}{\mathcal{P}}
\newcommand{\cB}{\mathcal{B}}
\newcommand{\cD}{\mathcal{D}}
\newcommand{\cE}{\mathcal{E}}
\newcommand{\PP}{{\mathbb P}}
\newcommand{\C}{{\mathbb C}}
\newcommand{\F}{{\mathbb F}}
\newcommand{\Q}{{\mathbb Q}}
\newcommand{\R}{{\mathbb R}}
\newcommand{\Z}{{\mathbb Z}}
\newcommand{\cK}{\mathcal{K}}
\newcommand{\cU}{\mathcal{U}}
\newcommand{\cV}{\mathcal{V}}
\newcommand{\cL}{\mathcal{L}}
\newcommand{\T}{\theta}
\newcommand{\fp}{\mathfrak{p}}
\DeclareMathOperator{\Ker}{Ker}
\DeclareMathOperator{\Image}{Image}
\DeclareMathOperator{\genus}{genus}
\DeclareMathOperator{\Frob}{Frob}
\DeclareMathOperator{\inv}{inv}
\DeclareMathOperator{\Ext}{Ext}
\DeclareMathOperator{\Aut}{Aut}
\DeclareMathOperator{\Gal}{Gal}
\DeclareMathOperator{\divv}{div}
\DeclareMathOperator{\Div}{Div}
\DeclareMathOperator{\Pic}{Pic}
\DeclareMathOperator{\SL}{SL}
\DeclareMathOperator{\GL}{GL}
\DeclareMathOperator{\Sp}{Sp}
\newcommand{\isom}{\cong}
\numberwithin{equation}{section}
\numberwithin{table}{section}
\title{Quadratic Points on  Modular Curves }
\author{Ekin Ozman}  
\author{Samir Siksek}
\address{Bogazici University\\
Department of Mathematics\\
Bebek, Istanbul, 34342 \\
Turkey}
\email{ekin.ozman@boun.edu.tr}
\address{Mathematics Institute\\
    University of Warwick\\
    CV4 7AL \\
    United Kingdom}
\email{samir.siksek@gmail.com}
\thanks{The first-named author is partially supported by Bogazici University Research Fund Grant Number 10842 and TUBITAK Research Grant 117F045. The second-named author is supported by an EPSRC LMF: L-Functions and Modular Forms Programme Grant EP/K034383/1.}
\date{\today}
\subjclass[2010]{11G05, 14G05, 11G18}
\keywords{Modular Curves, Quadratic Points, Mordell-Weil, Jacobian}
\begin{document}

	\maketitle

	\begin{abstract}
In this paper we determine the quadratic points on the modular
curves $X_0(N)$, where the curve is non-hyperelliptic,
the genus is $3$, $4$ or $5$, and the Mordell--Weil group
of $J_0(N)$ is finite. The values of $N$ are	
$34$, $38$, $42$, $44$, $45$, $51$, $52$, $54$, $55$, $56$, 
$63$, $64$, $72$, $75$, $81$.

As well as determining the non-cuspidal quadratic points, we give 
the $j$-invariants of the elliptic curves parametrized by those
points, and determine if they have complex multiplication
or are quadratic $\Q$-curves.
	\end{abstract}

\section{Introduction}

Let $N$ be a positive integer. 
%The modular curve $X_0(N)$ is 
%curves. In other words, $X_0(N)$ is the compactification of $Y_0(N)$ which is the moduli space of tuples
%$(E;C)$ where $E$ is an elliptic curve and $C$ 
%is cyclic $N$-torsion subgroup of $E[N].$ 
By the work of Mazur \cite{MazurEisen}, we have a complete understanding
of rational points on the modular curves $X_1(N)$; namely 
if $X_1(N)$ is of genus $\ge 1$, then the only rational points
are cuspidal. Merel's celebrated uniform boundedness theorem \cite{Merel}
asserts that for $d \ge 1$, there is some bound $B_d$ such that 
if $K$ is a number field of degree $\le d$, and $N \ge B_d$ is
prime, then the only $K$-rational points on $X_1(N)$ are cuspidal. 
There are more precise results for small fixed degrees $d$.
Kamienny \cite{Kamienny} showed that if $N \ge 17$ is prime
then $X_1(N)$ has no quadratic points, and the corresponding
result for cubic points was proved by Parent \cite{Parent1, Parent2}.
This has recently been extended to degrees $4$, $5$, $6$
by Derickx, Kamienny, Stein and Stoll \cite{DKSS}.

The situation concerning low degree points on the 
family $X_0(N)$ is much less happy. In fact we only
have complete results for the case of rational points.
Mazur \cite{Mazur}
proved that the only rational points on $X_0(N)$ 
are cusps when $N$ is prime and greater than $163$.
Later, these results were extended to composite levels 
and completed by 
Kenku (see \cite{Kenku} and the references therein). 
Results of Bars \cite{Bars} and Harris--Silverman \cite{HS}
assert that if $X_0(N)$ has genus $\ge 2$
then it has finitely many quadratic points,
except for 28 values of $N$.
A result of Aigner \cite{Aigner} gives all the solutions in quadratic fields to the Fermat equation $x^4 + y^4 = z^4$ which is isomorphic to $X_0(64)$. Recently, Bruin and Najman \cite{BN}
parametrizes all quadratic points on $X_0(N)$ explicitly
for those values of $N$ where $X_0(N)$ is hyperelliptic
and $J_0(N)$ has Mordell--Weil rank $0$; namely those
values of $N$ belonging to the set
\[
\{22, 23, 26, 28, 29, 30, 31, 33, 35, 39, 40, 41, 46, 47, 48, 50, 59, 71\}.
\]

In this paper we focus on non-hyperelliptic $X_0(N)$ 
of genera $3$, $4$, $5$ where the Mordell--Weil group
$J_0(N)(\Q)$ is finite. We determine the quadratic points
on these modular curves, and supply the modular interpretation
of the points. In the forthcoming second part of this work \cite{OS2}
we deal with those values of $N$ for which the genus is $3$, $4$, $5$
but $J_0(N)(\Q)$ is infinite, using a version of Chabauty
for symmetric powers of curves as in \cite{chabsym}. 
\begin{lemma}\label{lem:values}
The values of $N$ for which $X_0(N)$ is non-hyperelliptic,
of genus $g$ where $3 \le g \le 5$ and for which $J_0(N)(\Q)$ is finite
are 
\begin{description}[itemindent=1cm]
\item[genus $3$:] $34$, $45$, $64$;
\item[genus $4$:] $38$, $44$, $54$, $81$;
\item[genus $5$:] 
$42$, $51$, $52$, $55$, $56$, $63$,  $72$, $75$.
\end{description}
\end{lemma}

Let $X$ be a curve defined over $\Q$. A point $P \in X$ is called
quadratic if the field $\Q(P)$ is a quadratic extension of $\Q$.
%Let $K$ be a quadratic number field. For a quadratic number field $K$, we call the points in $X_0(K)$ as \emph{quadratic points}. Our main result can be summarized as follows:

\begin{theoremnon} 
For the values of $N$ listed in Lemma~\ref{lem:values}
the quadratic points on $X_0(N)$ are
as given in the tables of Section \ref{sec:tables}. 
\end{theoremnon}

For the non-cuspidal quadratic points, we compute 
$j$-invariants of the elliptic curves parametrized by them. 
In addition, we check whether those points are related via any 
Atkin-Lehner involutions and decide whether or not they have 
complex multiplication or are $\Q$-curves. 

One motivation for this work is the current interest
in the Fermat equation over quadratic fields
and similar Diophantine problems. The approach
via modularity and level-lowering requires
the irreducibility of the mod $p$ representation
of a Frey elliptic curve defined over
the given quadratic field, $K$ say. This Frey elliptic curve
often has extra level structure in the form
of a $K$-rational $2$ or $3$-isogeny. If the mod $p$
representation is reducible, then the
Frey curve gives rise to a $K$-rational point on $X_0(2p)$
or $X_0(3p)$, and thus having a parametrization
of quadratic points is useful in establishing irreducibility
for small values of $p$. The results of the current paper
have already proved useful in that context \cite{FermatSmall}.

\subsection*{A Theoretical Approach to the Problem}
Let $X/\Q$ be a non-hyperelliptic curve of genus $\ge 3$
with $J(\Q)$ finite where $J$ is the Jacobian of $X$,
and suppose for convenience that $X$ has at least one
rational point $P_0$.
For example $X$ could be any of the curves $X_0(N)$
for the values of $N$ in Lemma~\ref{lem:values}. 
There is a straightforward theoretical method 
(see for instance \cite{FLS}) of computing all
effective degree $2$ rational divisors on $X$,
and hence all points defined over quadratic extensions,
provided we are able to enumerate all the elements of $J(\Q)$. 
%Indeed, let $D_1,\dotsc,D_m$ be degree $0$ divisors on $X$
%such that $J(\Q)=\{[D_1],\dotsc,[D_m]\}$ under the natural
%idenfication. 
Let $X^{(2)}$ denote the second symmetric product of $X$.
 A $\Q$-rational point on $X^{(2)}$ can be represented by unordered pair 
$\{P_1,P_2\}$ where $P_1$, $P_2$ are either both
rational points on $X$, or a defined over a quadratic field
and Galois conjugate.
Let $D=P_1+P_2$ and let $\iota: X^{(2)}(\Q) \rightarrow J(\Q)$ 
be the map which sends $D$ to $[D-2P_0]$. Since $X$ is not hyperelliptic,
 $\iota$ is injective. 
By pulling back the finitely many points in $J(\Q)$, it is theoretically
possible to determine $X^{(2)}(\Q)$, hence the quadratic 
points of $X$ as follows. For any $\Q$-rational point on 
$X^{(2)}$, the corresponding degree $2$ divisor $D$ is linearly equivalent to 
$D^\prime+2P_0$ for some $[D^\prime]$ in $J(\Q)$ (where $D^\prime$
is rational degree $0$ divisor on $X$). 
Thus, for each $[D^\prime]$  in $J(\Q)$, we need to enumerate
the effective degree $2$ divisors linearly equivalent to $D^\prime+2P_0$.
For each $[D^\prime]$ in $J(\Q)$ we compute the Riemann-Roch space 
$L(D^\prime+ 2P_0)$. As the curve is non-hyperelliptic
the dimension of this space is either $0$ or $1$. If it has
dimension $0$ then there is no effective degree $2$ divisor
$D$ linearly equivalent to $D^\prime+2P_0$. If it has dimension $1$,
we let $f$ be a non-zero element of this space, and then
$D^\prime+2P_0+\divv(f)$ is the unique effective degree $2$ divisor
linearly equivalent to $D^\prime+2P_0$. There are potentially
two problems with this approach:
\begin{itemize}
\item It is often not convenient or practical to compute 
$J(\Q)=J(\Q)_{\mathrm{tors}}$.
\item Even if we can compute $J(\Q)$, it can be a large group,
and the Riemann-Roch computations might not be practical
for some of the more complicated elements $[D^\prime]$ of $J(\Q)$.
\end{itemize}

\subsection*{Our Approach}
For the modular curves of interest to us we 
first compute the rational cuspidal subgroup $C=C_0(N)(\Q)$ (see
below for definition) 
and bound its index inside $J(\Q)$, where $J=J_0(N)$. 
We therefore know a positive integer
$I$ such that $I \cdot J(\Q) \in C$. It follows that the 
effective degree $2$ divisors $D$ we seek satisfy $[D-2P_0]=I \cdot [D^\prime]$
where $[D^\prime] \in J(\Q)$. We then employ a version of the Mordell--Weil
sieve %together with a formal immersion criterion
%due to Derickx 
to help us eliminate most possibilities for $D^\prime$. 
Only then
do we use Riemann--Roch to recover the divisors $D$.

\subsection*{The Generalized Ogg Conjecture}
For now let $N$ be any positive
integer. Let $C_0(N)$ be the subgroup of $J_0(N)(\overline{\Q})$
generated by classes of differences of cusps;
this is known as the cuspidal subgroup. Write
$C_0(N)(\Q)$ for the subgroup of $C_0(N)$ of points
stable under
the action of $\Gal(\overline{\Q}/\Q)$; this
is known as the rational cuspidal subgroup,
and is contained in $J_0(N)(\Q)$. The Manin--Drinfeld
theorem \cite{Manin}, \cite{Drinfeld}
in fact asserts that $C_0(N) \subseteq J_0(N)(\overline{\Q})_{\mathrm{tors}}$,
and thus $C_0(N)(\Q) \subseteq J_0(N)(\Q)_{\mathrm{tors}}$.
A conjecture of Ogg, proved by Mazur \cite{MazurEisen}, says that 
the $C_0(N)(\Q)=J_0(N)(\Q)_{\mathrm{tors}}$ for $N$ prime.
A \lq\lq generalized Ogg conjecture\rq\rq\ (stated for example
in \cite{Ren}) asserts that this equality holds for all positive $N$.
Our computations verify the conjecture for most of the values of 
$N$ mentioned in the statement of Main Theorem.
\begin{thm}\label{thm:genOgg}
The generalized Ogg conjecture holds for $N=34, 38, 44, 45, 51, 52, 54, 56, 64, 81$.
\end{thm}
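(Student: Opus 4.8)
The plan is to prove, for each listed $N$, that the index $[J_0(N)(\Q):C_0(N)(\Q)]$ equals $1$. By Lemma~\ref{lem:values} the group $J_0(N)(\Q)$ is finite, hence equal to $J_0(N)(\Q)_{\mathrm{tors}}$, while the Manin--Drinfeld theorem gives the inclusion $C_0(N)(\Q)\subseteq J_0(N)(\Q)_{\mathrm{tors}}$. The conjecture for these $N$ is therefore equivalent to the single inequality $\#J_0(N)(\Q)\le \#C_0(N)(\Q)$, and the whole argument splits into computing the rational cuspidal group exactly and then producing a matching upper bound for the full rational torsion.

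First I would compute $C_0(N)(\Q)$ explicitly. Enumerate the cusps of $X_0(N)$ and their fields of definition, and then determine the exact order in $J_0(N)$ of each degree-zero divisor supported on the cusps. These orders are classical and can be read off from modular (eta-quotient) units, whose divisors on $X_0(N)$ are given by explicit formulae; this yields the relation lattice among cuspidal classes, and hence the abelian-group structure and order of $C_0(N)(\Q)$, taken as the $\Gal(\overline{\Q}/\Q)$-invariant part of $C_0(N)$. For the small levels in question the cusps fall into few Galois orbits, so this step is entirely effective.

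For the upper bound I would reduce modulo primes of good reduction. For every odd prime $\ell\nmid N$ the curve $X_0(N)$ has good reduction, and the reduction map $\red_\ell\colon J_0(N)(\Q)_{\mathrm{tors}}\to J_0(N)(\F_\ell)$ is injective, since its kernel is a pro-$\ell$ group that is torsion-free for $\ell$ odd. Thus $J_0(N)(\Q)$ embeds as a subgroup of the finite group $J_0(N)(\F_\ell)$, whose order and group structure I would compute from the action of $\Frob_\ell$: via Eichler--Shimura the characteristic polynomial of Frobenius is determined by the Hecke operator $T_\ell$, equivalently by the $L$-polynomial of $X_0(N)/\F_\ell$, and the group structure then follows from the Smith normal form of $\Frob_\ell-1$ on $H^1$. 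Intersecting the constraints coming from several small primes $\ell$ bounds $J_0(N)(\Q)$ from above; whenever this bound meets $\#C_0(N)(\Q)$ from the previous step, the desired equality $C_0(N)(\Q)=J_0(N)(\Q)$ follows.

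The hard part will be the prime $2$. Since reduction at $\ell=2$ is unavailable (and several of the $N$ are even in any case), the $2$-primary part of the index must be pinned down using only odd primes; if every accessible $J_0(N)(\F_\ell)$ happens to carry strictly more $2$-torsion than $C_0(N)(\Q)$, then reduction cannot rule out a nontrivial $2$-part in the index and the method stalls. This is exactly what I expect to occur for the five genus-$5$ levels $42,55,63,72,75$ absent from the statement, whereas for the listed $N$ the odd-prime reductions should suffice to force the index to be trivial.
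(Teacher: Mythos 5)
Your overall strategy---compute $C_0(N)(\Q)$ exactly, embed $J_0(N)(\Q)=J_0(N)(\Q)_{\mathrm{tors}}$ into $J_0(N)(\F_\ell)$ for odd $\ell\nmid N$ via injectivity of torsion under reduction, and intersect the constraints from several primes---is the same skeleton as the paper's argument. But your final paragraph contains a concrete miscalibration that leaves a genuine gap: you predict that the $2$-primary obstruction (every accessible $J_0(N)(\F_\ell)$ carrying more $2$-torsion than $C$) occurs only for the levels absent from the statement. In fact it occurs for $N=45$ and $N=64$, both of which \emph{are} in the statement. For $N=45$ one has $C\cong\Z/2\Z\times\Z/4\Z\times\Z/8\Z$, so $C[2]\cong(\Z/2\Z)^3$, but $\Q(J[2])=\Q(\sqrt{-3},\sqrt{5})$; every Frobenius at a good prime is either trivial or one of three involutions in $\Gal(\Q(\sqrt{-3},\sqrt 5)/\Q)$, each fixing a subspace of $J[2]$ of dimension at least $4$, so $J(\F_p)[2]\supseteq(\Z/2\Z)^4$ for \emph{every} good $p$ and reduction can never certify $J(\Q)[2]=(\Z/2\Z)^3$. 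The paper closes this by computing the full mod-$2$ representation $\Gal(\overline\Q/\Q)\to\Sp_6(\F_2)$ from the $28$ bitangents of the plane quartic model and intersecting the fixed spaces of the three nontrivial elements. For $N=64$ the ambiguity between $\Z/2\Z\times(\Z/4\Z)^2$ and $(\Z/4\Z)^3$ is resolved not by more primes but by computing the cuspidal group over $\Q(\sqrt 2)\subset\R$ and comparing with the archimedean bound $J(\R)_{\mathrm{tors}}\cong(\Q/\Z)^g\times(\Z/2\Z)^{m-1}$ coming from Gross--Harris together with Snowden's count of real components. Without these two supplementary arguments your proof establishes the theorem only for the other eight values of $N$.

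Two smaller points. First, you omit the real place entirely: the paper uses $J(\R)_{\mathrm{tors}}$ as a constraint on the invariant factors of every candidate group, and this is not merely convenient but essential in the $N=64$ case. Second, the paper does not just intersect the isomorphism classes of the groups $J_0(N)(\F_\ell)$ admits as subgroups containing the image of $C$; it tracks the embedding $\iota\colon C\hookrightarrow A$ itself and tests, via the explicit extension-problem criterion of its Proposition on lifting, whether the embeddings at different primes are compatible. This is strictly stronger than comparing abstract group structures and is part of why the sieve over odd primes succeeds for the remaining levels; if you only intersect Smith normal forms you may be left with spurious candidates that the finer comparison eliminates.
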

For recent partial results towards the generalized Ogg conjecture
see \cite{Ren}, \cite{Ling}, \cite{Ohta}, \cite{Yoo}.

\bigskip

The second-named author would like to thank
Martin Derickx, Steve Donnelly and  Derek Holt for useful
conversations. The authors would like to thank Ozlem Ejder and
Jeremy Rouse for pointing out that $X_0(64)$
is isomorphic to the Fermat quartic and pointing us 
in the direction of Aigner's paper \cite{Aigner}. The authors are grateful 
to the referee for suggesting several significant
improvements and simplifications. The authors would also like
to thank the \emph{Istanbul Center for Mathematical Sciences (IMBM)}
for hosting their collaboration during April 2018.
%LMFDB meetings Arizona, Warwick.

%%%%%%%%%%%%%%%%%%%%%%%%%%%%%%%%%%%%%%%%%%%%%%%%%%%%%%%%%%%%%%%%%%

\section{Choices of $X_0(N)$: Proof of Lemma~\ref{lem:values}}\label{sec:choices}
Ogg \cite{Ogg} has shown that the values of $N$
for which $X_0(N)$ is hyperelliptic are
\[
22,\, 23,\, 26,\, 28,\, 29,\, 30,\, 31,\, 33,\, 35,\, 37,\, 39,\, 
40,\ 41,\, 46,
47,\, 48,\ 50,\, 59,\, 71.
\]
The genus of $X_0(N)$ grows with $N$ and there are only
finitely many values of $N$ for any given genus $g$.
Using the explicit formula for the genus (e.g. ~
\cite[Section 3.9]{DS}), which is
also the dimension formula for $S_2(N)$,
we found the values of $N$ for which $X_0(N)$
is non-hyperelliptic and has genus $3 \le g \le 5$
to be
\begin{description}[itemindent=1cm]
\item[genus $3$:] $34$, $43$, $45$, $64$;
\item[genus $4$:] $38$, $44$, $53$, $54$, $61$, $81$;
\item[genus $5$:] $42$, 
$51$, $52$, $55$, $56$, $57$, $63$, $65$, $67$, $72$, $73$, $75$.
\end{description}
We need to decide on the values of $N$ in this
list for which $J_0(N)$ has rank $0$. For this
we briefly recall standard facts about the decomposition
of $J_0(N)$ as a product of abelian varieties of $\GL_2$-type;
for more details see for example Stein's thesis \cite{SteinsThesis}.

Let $f_1,\dotsc,f_k$ be representatives of the Galois-conjugacy
classes of Hecke eigenforms in $S_2(N)$. Let $K_i$ be
the (totally real) number field generated by the coefficients
of $f_i$, and let $d_i$ be its degree. Attached to each 
$f_i$ (or more precisely to its Galois-conjugacy class) is
an abelian variety $\cA_i/\Q$ of dimension $d_i$
whose endomorphism ring contains an order in $K_i$.
In particular the rank of $\cA_i(\Q)$ is a multiple of $d_i$.
The modular Jacobian $J_0(N)$ is 
isogenous to $\cA_1 \times \cdots \times \cA_k$.
Thus $J_0(N)(\Q)$ is finite if and only if the $\cA_i$
all have rank $0$. Let $\cA$ be any of the $\cA_i$. We write
$L(\cA,s)$ for the $L$-function of $\cA$. 
The conjecture of Birch and Swinnerton-Dyer asserts that
the rank of $\cA(\Q)$ is equal to the order of vanishing
of $L(\cA,s)$ at $s=1$. A deep theorem of 
Kolyvagin and Logachev \cite{KL} asserts that
if $L(\cA,1) \ne 0$ then $\cA(\Q)$ has rank $0$. 
In fact an algorithm of Stein \cite[Chapter 3]{SteinsThesis} 
allow us to compute the exact value $L(\cA,1)/\Omega_\cA \in \Q$
where $\Omega_\cA$ is the real volume of $\cA$.
Using the \texttt{Magma} \lq\lq modular abelian
varieties package\rq\rq, which is an implementation by Stein of 
his algorithms \cite{Stein},
\cite{SteinsThesis} we computed the ratios
$L(\cA,1)/\Omega_\cA$ and found them all to be non-zero
for the values of $N$ listed in the statement of Lemma \ref{lem:values}.
Thus we know that $J(\Q)$ is finite for those values.
It remains to show that $J(\Q)$ is infinite for
the remaining values
$43$, $53$, $61$, $57$, $65$,  $67$, $73$; for these
we claim the ranks respectively are $1$, $1$, $1$, $1$, $1$, $2$, $2$.
For each of these values of $N$, there is only one factor $\cA$
for which $L(\cA,1)=0$, and thus the rank of $J_0(N)$ is
equal to the rank of $\cA$. For $N=43$, $53$, $61$, $57$, $65$
this factor $\cA$ happens to be a rank $1$ elliptic curve,
completing the proof in those cases.
For $N=67$ and $73$ this factor is simply $J_0^+(N)$ which in both
cases is $2$-dimensional, and it remains to
show that this has rank $2$ in both cases. For both values, a model for 
the genus $2$ curve $X_0^+(N)$
is given by Galbraith \cite[page 43]{Galbraith}.
Using the \texttt{Magma} implementation
of Stoll's $2$-descent algorithm \cite{Stoll}
we checked that the rank of $J_0^+(N)$ is indeed $2$ in both cases.

\section{Computing equations for the $X_0(N)$, Atkin--Lehner Involutions,
$j$-maps, and cusps}     \label{section:models} 

Let $N$ be such that the modular curve $X_0(N)$ has genus $g \ge 3$
and is non-hyperelliptic. Then the canonical map embeds $X_0(N)$
into projective space $\PP^{g-1}$ as a smooth curve of degree $2g-2$, and it is 
this model that we work with. An algorithm for writing
down the model is given by Galbraith \cite{Galbraith}. Although
equations for $X_0(N)$ for most of the $N$ we consider are 
already given by Galbraith, we needed to redo his computations
so that we can explicitly construct the Atkin--Lehner involutions
on the models, and also the $j$-map 
$X_0(N) \rightarrow X(1)$. We start by briefly recalling
Galbraith's method.

Let $S_2(N)$ be the space
of weight $2$ cuspforms of level $N$ with $q$-expansion
coefficients belonging to $\Q$; this
has dimension $g=\genus(X_0(N))$.  The cuspforms in $S_2(N)$
can be identified with the regular differentials
on $X_0(N)/\Q$ via the map $f(q) \mapsto 2 \pi i f(q) dq/q$.
Fix a $\Q$-basis for $f_0,\dotsc,f_{g-1}$ for  $S_2(N)$;
such a basis maybe computed as $q$-expansions to 
a desired precision
 via the modular symbols algorithm \cite{Cre}, \cite{Stein}.
Now let $F \in \Q[x_0,\dotsc,x_{g-1}]$ be homogenous of degree $d$.
Then $F(f_0,\dotsc,f_{g-1})$ is a cuspform of weight $2d$
and level $N$. 
Write 
\[
I=[\SL_2(\Z) : \Gamma_0(N)]=N \prod_{p \mid N} \left(1+\frac{1}{p} \right).
\]
By Sturm's Theorem (e.g. ~ \cite[Theorem 9.18]{Stein}),
$F(f_0,\dotsc,f_{g-1})=0$ if and only if 
the $q$-expansion $F(f_0(q),\dotsc,f_{g-1}(q))=O(q^r)$
with $r=\lfloor d I/6 \rfloor+1$. Thus determining the vector
space of all homogenous $F$ of degree $d$ such that
$F(f_0,\dotsc,f_{g-1})=0$ is a straightforward linear algebra
computation, and carrying this out  for $d \mid (2g-2)$ ensures
that we have a system of equations that cuts out a
model for $X_0(N)$ in $\PP^{g-1}$. For the values of $N$
in Lemma~\ref{lem:values} we carried this out
and, conveniently, found a model for $X_0(N)$
that has good reduction away from the primes dividing $N$.
The equations for these models are given in our tables at the end.

\medskip

Next we would like to work out the Atkin--Lehner involutions
on $X_0(N)$. Let $m \mid N$ such that $\gcd(m,N/m)=1$
and $m \ne 1$. The modular symbols algorithm gives 
the action of Atkin--Lehner operator $w_m$ as a linear
operator of order $2$ on $S_2(N)$ and hence as an
order $2$ matrix of size $g \times g$
with entries in $\Q$. Now the linear automorphism
on $\PP^{g-1}$ induced by this matrix restricts to the
Atkin--Lehner involution $w_m$ on $X_0(N)$.

\medskip

Next we describe how we obtain the 
map $j : X_0(N) \rightarrow X(1)$; this is not
described in \cite{Galbraith} but similar computations
are found in \cite{BC} and \cite{FLS}. We start with
largest divisor $n \mid N$ for which we already know the following:
\begin{enumerate}
\item[(i)] equations for $X_0(n)$;
\item[(ii)] generators $u_1,\dotsc,u_s$ for the function field
of $X_0(n)$ together with their $q$-expansions at the cusp
at infinity (these will in general be Laurent series);
\item[(iii)] the map $X_0(n) \rightarrow X(1)$.
\end{enumerate}
In fact, for all values of $N$ that we consider, we found that 
the \texttt{Magma} \lq\lq small modular curves\rq\rq\ package
gives (i), (ii), (iii) with $n$ the largest proper divisor
of $N$.  The idea is to construct the degeneracy map 
$X_0(N) \rightarrow X_0(n)$
whence composition with $X_0(n) \rightarrow X(1)$ gives the desired
$j: X_0(N) \rightarrow X(1)$. For this it is sufficient
to construct the pull-backs of $u_1,\dotsc,u_s$
to $X_0(N)$ which we denote by $U_1,\dotsc,U_s$. 
Fix $1 \le i \le s$ and let $U=U_i$ and $u=u_i$.
Then $U$ is a rational function on $X_0(N)$, and hence 
can be written as 
\[
U=\frac{F(x_0,\dotsc,x_{g-1})}{G(x_0,\dotsc,x_{g-1})}
\]
where $F$, $G$ are homogeneous in $\Q[x_0,\dotsc,x_{g-1}]$
of equal degree $d$. These satisfy
\begin{equation}\label{eqn:linsys}
F(f_0(q),\dotsc,f_{g-1}(q))-u(q) \cdot G(f_0(q),\dotsc,f_{g-1}(q))=0
\end{equation}
where $u(q)$ is the known $q$-expansion for $u$.
Fix a degree $d$. Let $V_d$ be the vector space of 
all homogeneous $\Q[x_0,\dotsc,x_{g-1}]$ of degree $d$ and let
$V_d^\prime$ be the subspace belonging to the homogenous ideal 
generated by the equations of $X_0(N)$. Note that
if $H \in V_d^\prime$ then $H(f_0(q),\dotsc,f_{g-1}(q))=0$.
Thus we may think of  \eqref{eqn:linsys} as a linear equation
in $(F,G) \in V_d/V_d^\prime \times V_d/V_d^\prime$, and we would like
to find a non-trivial solution (for a suitable choice of $d$).
Fixing $d$ and a large \lq precision\rq\ $m$ we consider the linear system
of equations
\begin{equation}\label{eqn:linsysm}
F(f_0(q),\dotsc,f_{g-1}(q))-u(q) G(f_0(q),\dotsc,f_{g-1}(q)) = O(q^m),
\end{equation}
in $(F,G) \in V_d/V_d^\prime \times V_d/V_d^\prime$. By choosing
$d$ large enough we were always able to find a non-trivial 
solution $(F,G)$ and thus a highly plausible guess for $U=F/G$
(we took $m=500$ in all our examples). Next we checked
that the guesses $U_1,\dotsc,U_s$ do in fact give a map 
$X_0(N) \rightarrow X_0(n)$ and we composed this
with the known $X_0(n) \rightarrow X(1)$ 
to obtain a proposed $j$-function $X_0(N) \rightarrow X(1)$
 which for now we denote
by $j^\prime$. We did not prove the correctness
of the degeneracy map $X_0(N) \rightarrow X_0(n)$ (which
we do not use later), but we did
prove the correctness of the proposed $j$-function on $X_0(N)$
as we now explain. For now we think of $j$ and $j^\prime$
as elements of the function field of $X_0(N)$. 
The above procedure gives 
$j^\prime=H_1(x_0,\dotsc,x_{g-1})/H_2(x_0,\dotsc,x_{g-1})$ where $H_1$, $H_2$ are homogeneous in $\Q[x_0,\dotsc,x_{g-1}]$ of equal degree. 
The $q$-expansion for $j^\prime$ is given by 
$j^\prime(q)=H_1(f_1(q),\dotsc,f_{g-1}(q))/H_2(f_1(q),\dotsc,f_{g-1}(q))$
and by computing enough terms 
we checked that $j(q)-j^\prime(q)=O(q^m)$ some large $m$, where
\[
j(q)=\frac{1}{q}+744+196884 q + 21493760 q^2 + 864299970 q^3+\cdots
\]
is the usual expansion of the $j$-function. Note that here
we cannot simply apply Sturm's Theorem to deduce $j=j^\prime$
as we have not shown that $j-j^\prime$ is a modular form
(i.e. holomorphic at all the cusps of $X_0(N)$), so we
adopt a different approach. Let $D$ and $D^\prime$ be
the divisor of poles for $j$ and $j^\prime$ respectively.
By \cite[pages 106--107]{DS}, for $N>2$, 
\[
\deg(D)=\deg(j)=\frac{N^2}{\varphi(N)}
\cdot \prod_{p \mid N} \left( 1-\frac{1}{p^2}\right),
\]
where $\varphi$ denotes the Euler totient-function.
Since we know $j^\prime$ we can compute $D^\prime$
explicitly and we checked that $\deg(D^\prime)=\deg(D)$
in all cases. Now if $j \ne j^\prime$ then 
the divisor of poles for $j-j^\prime$ is bounded
by $D+D^\prime$ and so has degree at most $2\deg(D)$.
But the order of vanishing of $j-j^\prime$
at the cusp $\infty$ is at least $m$. Since the divisor
of zeros has the same degree as the divisor of poles
we deduce that $m \le 2\deg(D)$.
In all cases $m$ exceeded   $2\deg(D)+1$ by a huge
margin, proving $j=j^\prime$.

We note in passing that the \texttt{Magma}
\lq\lq small modular curves\rq\rq\ package
is a wonderful resource, but that the justification
for the modular curves data is only very briefly sketched
in the \texttt{Magma} handbook. Whilst we make use
of this package to guess the $j$-function on $X_0(N)$
our subsequent proof of the correctness of our guess is
independent of it.

\medskip

Finally, as we have have the $j$-map we can compute
the cusps; these are merely the poles of $j$.

%%%%%%%%%%%%%%%%%%%%%%%%%%%%%%%%%%%%%%%%%%%%%%%%%%%%%%%%%%%%%%%%

%%%%%%%%%%%%%%%%%%%%%%%%%%%%%%%%%%%%%%%%%%%%%%%%%%%%%%%%%%%%%%%%

\section{An Extension Problem for Finite Abelian Groups}\label{sec:Ext}
Let $C$, $A_1$, $A_2$ be finite abelian groups and suppose
$\iota_j : C \rightarrow A_j$ are injective homomorphisms.
In this section we address the following question: \emph{is there is an isomorphism $\psi : A_1 \rightarrow A_2$ such
that $\psi \circ \iota_1=\iota_2$?} This a problem we will need to 
address later on where $C$ happens to be the rational
cuspidal subgroup of $J_0(N)$ and $A_1$, $A_2$ are candidates
for $J_0(N)(\Q)_{\mathrm{tors}}$ obtained from local
information. Let $p$ be a prime, and write $C[p^\infty]$, $A_1[p^\infty]$ and $A_2[p^\infty]$
for the $p$-power torsion in $C$, $A_1$, $A_2$. Clearly the question has a positive
answer if and only if the corresponding question for $C[p^\infty]$, $A_1[p^\infty]$ and $A_2[p^\infty]$
has a positive answer for every prime $p$ dividing the orders of the groups $A_1$, $A_2$.
Thus we may suppose that $C$, $A_1$, $A_2$ are $p$-power torsion finite abelian groups
for some prime $p$.

Of course the question has a negative answer
if $A_1$ is not isomorphic to $A_2$. Thus first we 
make sure that $A_1$, $A_2$ are isomorphic and write
down an explicit isomorphism $\psi_0 : A_1 \rightarrow A_2$.
Next we can write down the automorphism group of $A_2$
and deduce the set of all isomorphisms $A_1 \rightarrow A_2$;
any such isomorphism will be a composition of $\psi_0$
with an automorphism of $A_2$. These steps can be carried out 
using for example algorithms explained in \cite{CH} and implemented
in \texttt{Magma}. Now we may simply test the isomorphisms $\psi : A_1 \rightarrow A_2$
and see if there is one that satisfies $\psi \circ \iota_1=\iota_2$.
This strategy does provide a theoretical answer to our
question. In our application we have found it impractical as the automorphism groups $\Aut(A_i)$
are enormous. As an illustration we point out that if $A=(\Z/p\Z)^n$,
then $\Aut(A) \cong \GL_n(\F_p)$. Thus whilst $\#A=p^n$, we have $\#\Aut(A)=(p^n-1)(p^n-p) \cdots (p^n-p^{n-1})$.

Write $B_i=A_i/\iota_i(C)$ and let $\pi_i : A_i \rightarrow B_i$ be the quotient maps.
Any isomorphism  $\psi : A_1 \rightarrow A_2$ 
%that takes $\iota_i(C)$ to $\iota_j(C)$  
satisfying $\psi \circ \iota_1=\iota_2$
induces an isomorphism $\mu : B_1 \rightarrow B_2$
that makes the diagram \eqref{eqn:CD} commute, where the two rows are exact.
\begin{equation}\label{eqn:CD}
\begin{CD}
0 @>>> C @> \iota_1 >> A_1 @> \pi_1 >> B_1 @>>>0\\
@.    @|              @V \psi VV       @V \mu VV \\
0@>>> C @> \iota_2 >> A_2 @> \pi_2 >> B_2 @>>>0\\
\end{CD}
\end{equation}
Thus we know that our question has a negative answer if $B_1$, $B_2$ are not isomorphic.
We suppose that they are isomorphic and we enumerate all isomorphisms $\mu : B_1 \rightarrow B_2$
(by computing the automorphism group of $B_2$). In our application the groups $B_i$ tend
to be rather small and so there are far fewer isomorphisms $B_1 \rightarrow B_2$
than isomorphisms $A_1 \rightarrow A_2$. For each $\mu$ we now ask the following: is there an isomorphism
$\psi: A_1 \rightarrow A_2$ that makes the diagram \eqref{eqn:CD} commute. 
In essence we can interpret both exact sequences as extensions of $B_2$ by $C$
and we are asking if they are equivalent extensions. However we are interested
in answering this question in the category of finite abelian groups and would like
to avoid computing $\Ext(B_2,C)$ (which classifies all extensions of $B_2$ by $C$ including the
non-abelian ones) as well as avoiding the computation of the images of the two sequences in this group.
The following proposition
gives us an efficient way of answering the question in the category of finite abelian groups.
\begin{prop}\label{prop:lifting}
Let 
\[
0 \rightarrow C \xrightarrow{\iota_i} A_i \xrightarrow{\pi_i} B_i \rightarrow 0
\]
be exact sequences of finite abelian groups, for $i=1$, $2$. Let $\mu : B_1 \rightarrow B_2$
be an isomorphism. 
Let $x_1,\dotsc,x_r$ be any elements of $A_1$ such that
$A_1$ is generated by $\iota_1(C)$ together with $x_1,\dotsc,x_r$. 
Let $y_1,\dotsc,y_r \in A_2$ satisfy $\pi_2(y_j)=\mu(\pi_1(x_j))$
for $j=1,\dotsc,r$ (these must exist as $\pi_2$ is surjective). 
Write $\mathbf{x}=(x_1,\dotsc,x_r) \in A_1^r$ and $\mathbf{y}=(y_1,\dotsc,y_r) \in A_2^r$.
Let $\cA=\Z^r \times C$. Let $\tau: \cA \rightarrow A_1$ be given by 
\[
\tau\left(\mathbf{m},c \right)=\mathbf{m} \cdot \mathbf{x}+\iota_1(c) \qquad \text{for $\mathbf{m} \in \Z^r$ and $c \in C$};
\]
here $(m_1,\dotsc,m_r) \cdot (x_1,\dotsc,x_r)$ is shorthand for the linear combination $m_1 x_1+\cdots+m_r x_r$.
Let $(\mathbf{n}_1,c_1),\dotsc,(\mathbf{n}_s,c_s)$ be a set of generators for the kernel of $\tau$. 
Define a homomorphism
\[
\eta : C^r \rightarrow A_2^s, \qquad \mathbf{t} \mapsto 
\left(\iota_2(\mathbf{n_1} \cdot \mathbf{t}), \dotsc, \iota_2(\mathbf{n_s} \cdot \mathbf{t}) \right),
\]
and let
\[
\kappa=\left(\mathbf{n}_1 \cdot \mathbf{y}+\iota_2(c_1), \dotsc,\mathbf{n}_s\cdot \mathbf{y}+\iota_2(c_s) \right) \in A_2^s.
\]
There exists an isomorphism $\psi : A_1 \rightarrow A_2$ making the diagram \eqref{eqn:CD}
commute if and only if $\kappa \in \Image(\eta)$.
\end{prop}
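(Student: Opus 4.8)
The plan is to reduce the question to a single membership test by combining the short five lemma with the universal property of the presentation $\tau$. First I would observe that \emph{any} homomorphism $\psi : A_1 \rightarrow A_2$ making \eqref{eqn:CD} commute is automatically an isomorphism: both rows are short exact, the left vertical map is $\mathrm{id}_C$ and the right vertical map is the isomorphism $\mu$, so the short five lemma forces $\psi$ to be bijective. Consequently it suffices to decide whether a commuting \emph{homomorphism} exists; bijectivity never has to be checked by hand.

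Next I would parametrize the candidate commuting homomorphisms. Commutativity of the left square forces $\psi \circ \iota_1 = \iota_2$, while commutativity of the right square forces $\pi_2(\psi(x_j)) = \mu(\pi_1(x_j)) = \pi_2(y_j)$, so $\psi(x_j) - y_j \in \ker \pi_2 = \iota_2(C)$. Since $\iota_2$ is injective we may write $\psi(x_j) = y_j + \iota_2(t_j)$ for a unique $t_j \in C$, and because $A_1$ is generated by $\iota_1(C)$ together with the $x_j$, these values determine $\psi$ completely. Thus the candidate maps are indexed by $\mathbf{t} = (t_1, \dotsc, t_r) \in C^r$, and conversely every commuting $\psi$ arises from exactly one such $\mathbf{t}$.

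The crux is to translate ``this assignment assembles into a well-defined homomorphism'' into the stated condition, and here I would use that $\tau : \cA = \Z^r \times C \rightarrow A_1$ is surjective. A consistent assignment descends to a genuine homomorphism on $A_1 \cong \cA/\ker\tau$ precisely when the associated map $\Psi : \cA \rightarrow A_2$, $\Psi(\mathbf{m},c) = \mathbf{m}\cdot\mathbf{y} + \iota_2(\mathbf{m}\cdot\mathbf{t}) + \iota_2(c)$, vanishes on $\ker\tau$; as $\Psi$ is a homomorphism this is equivalent to $\Psi$ killing the generators $(\mathbf{n}_k,c_k)$. Expanding directly gives $\Psi(\mathbf{n}_k,c_k) = \bigl(\mathbf{n}_k\cdot\mathbf{y} + \iota_2(c_k)\bigr) + \iota_2(\mathbf{n}_k\cdot\mathbf{t}) = \kappa_k + \eta(\mathbf{t})_k$, so the simultaneous vanishing is equivalent to $\kappa = -\eta(\mathbf{t}) = \eta(-\mathbf{t})$. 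Since $-\mathbf{t}$ ranges over all of $C^r$, such a $\mathbf{t}$ exists if and only if $\kappa \in \Image(\eta)$, which is the assertion.

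I expect the only delicate step to be this last translation: one must verify that the descended $\psi$ indeed makes both squares commute (immediate from $\psi(\tau(0,c)) = \iota_2(c)$ and $\pi_2(\psi(x_j)) = \pi_2(y_j)$, using that $\pi_2\circ\psi$ and $\mu\circ\pi_1$ are homomorphisms agreeing on a generating set) and that the sign in $\Psi(\mathbf{n}_k,c_k)$ is tracked correctly. There is no deep obstacle; the whole content is the recognition that the presentation $\tau$ converts the extension-equivalence problem into the finite linear condition $\kappa \in \Image(\eta)$ over $C$, thereby bypassing both $\Ext(B_2,C)$ and the prohibitively large groups $\Aut(A_i)$.
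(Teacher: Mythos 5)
Your proposal is correct and follows essentially the same route as the paper: both reduce the existence of a commuting $\psi$ to whether the map $(\mathbf{m},c)\mapsto \mathbf{m}\cdot\mathbf{y}+\iota_2(\mathbf{m}\cdot\mathbf{t})+\iota_2(c)$ on $\cA$ kills the generators of $\Ker(\tau)$, which unwinds to $\kappa=\eta(-\mathbf{t})$, and both invoke the short five lemma to upgrade the resulting homomorphism to an isomorphism. The only cosmetic difference is that you organize the two directions as a single parametrization of candidate maps by $\mathbf{t}\in C^r$, whereas the paper argues the two implications separately; the underlying computation and sign-tracking are identical.
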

\begin{proof}
Suppose $\kappa$ belongs to the image of $\eta$. 
We will show the existence of a homomorphism $\psi : A_1 \rightarrow A_2$ making the 
diagram \eqref{eqn:CD} commute. It then easily follows that $\psi$ must be an isomorphism (this
fact is known as the \emph{short-five lemma}).
As $\kappa$ is in the image of $\eta$, so is $-\kappa$.
Thus there 
is some $\mathbf{t}=(t_1,\dotsc,t_r) \in C^r$ such that 
\begin{equation}\label{eqn:kercond}
\mathbf{n}_j \cdot \mathbf{y}+\iota_2(c_j)=-\iota_2(\mathbf{n}_j \cdot \mathbf{t}), \qquad j=1,\dotsc,s.
\end{equation}
Let 
\[
\sigma : \cA \rightarrow A_2, \qquad 
\sigma\left(\mathbf{m},c \right)=\mathbf{m} \cdot \mathbf{y}+\iota_2(\mathbf{m}\cdot \mathbf{t})+\iota_2(c),
\qquad \text{for $\mathbf{m} \in \Z^r$ and $c \in C$}.
\]
Recall that $(\mathbf{n}_1,c_1),\dotsc,(\mathbf{n}_s,c_s)$ are generators for the kernel of $\tau: \cA \rightarrow A_1$.
Condition \eqref{eqn:kercond} ensures that the kernel of $\tau$ is contained in the
kernel of $\sigma: \cA \rightarrow A_2$. Thus we obtain a well-defined homomorphism
$\cA/\Ker(\tau) \rightarrow \cA/\Ker(\sigma) \rightarrow A_2$. By hypothesis
$A_1$ is generated by $x_1,\dotsc,x_r$ and $\iota_1(C)$ thus $\tau : \cA \rightarrow A_1$
is surjective. We let $\psi$ be the composition
\[
A_1 \xrightarrow{\sim} \cA/\Ker(\tau) \rightarrow \cA/\Ker(\sigma) \rightarrow A_2.
\]
It follows from the definitions of $\tau$ and $\sigma$ that $\psi$ sends
$\iota_1(c)$ to $\iota_2(c)$ for any $c \in C$, and sends
$x_j$ to $y_j+\iota_2(t_j)$ for $j=1,\dots,r$. In particular, the left-hand square of \eqref{eqn:CD}
commutes. Moreover 
\[
\pi_2(\psi(x_j))=\pi_2(y_j+\iota_2(t_j))=\pi_2(y_j)=\mu(\pi_1(x_j))
\]
where the last equality comes from our original definition of the $y_j$.
Since $x_1,\dotsc,x_j$ together with $\iota_1(C)$ generate $A_1$ the right-hand square of \eqref{eqn:CD}
also commutes.

Now conversely suppose there is an isomorphism $\psi: A_1 \rightarrow A_2$ making \eqref{eqn:CD} commute.
Thus $\pi_2(\psi(x_j))=\mu(\pi_1(x_j))=\pi_2(y_j)$. By the exactness of the bottom row,
 $\psi(x_j)=y_j+\iota_2(t_j)$ for some $t_j \in C$. Now let $(\mathbf{n}_i,c_i)$ be as in the statement of the theorem.
Thus $\mathbf{n}_i \cdot \mathbf{x}+\iota_1(c_i)=0$. Letting $\mathbf{t}=(t_1,\dotsc,t_r)$
and applying $\psi$ we have 
\[
\mathbf{n}_i \cdot \mathbf{y}+\iota_2(\mathbf{n}_i \cdot \mathbf{t})+\iota_2(c_i)=0.
\]
It follows that $\kappa=\eta(-\mathbf{t})$ completing the proof.
\end{proof}

%%%%%%%%%%%%%%%%%%%%%%%%%%%%%%%%%%%%%%%%%%%%%%%%%%%%%%%%%%%%%%%%

\section{The Mordell--Weil Information}
Now let $N$ be one of the values of $N$ in Lemma~\ref{lem:values}.
In this section we explain how compute the structure
of the rational cuspidal subgroup $C_0(N)(\Q)$
as well as deducing
a small integer $I$ such that $I \cdot J_0(N)(\Q)_{\mathrm{tors}} 
\subseteq C_0(N)(\Q)$.

\subsection{\textbf{Magma} computations in Jacobians of curves}
It is more convenient
computationally to work with places of a curve than with 
points defined over various extensions of the base field.
Let $X$ be a smooth projective curve over a perfect field $F$.
A place $\cP$ of $X$ is simply a set 
of distinct points $\{P_1,\dotsc,P_n\} \subset X(\overline{F})$ 
that is stable under the action of $\Gal(\overline{F}/F)$
and forms a single orbit under that action. The size $n$
is called the degree of $\cP$. It also happens to be the
degree of $F(P_i)/F$ for any $i$. It is often convenient
to think of $\cP$ as the effective degree $n$ rational divisor 
$P_1+\cdots+P_n \in \Div(X/F)$,
and indeed any rational divisor can be written uniquely
as an integral linear combination of places.
We shall make use of \lq\lq algebraic function fields\rq\rq\
package
within \texttt{Magma}, which carries out computations in
$\Div(X/F)$ and more importantly in the quotient 
$\Pic(X/F)$ and its degree $0$ part $\Pic^0(X/F) \cong J(F)$
(for this latter isomorphism to hold we need to suppose
the existence of a degree $1$ place on $X$). For the theory
behind these algorithms see Hess' paper \cite{Hess}. When $F=\Q$ or 
a number field some of the computations we would like to do
become impractical, particularly in the genus $5$ cases.
However \texttt{Magma} computations in 
$\Pic^0(X/F)$ are much more efficient when $F$ is a finite field,
and in fact the \texttt{Magma} implementation of Hess' algorithm
does compute the structure of $\Pic^0(X/F) \cong J(F)$ in all the 
cases of interest to us, where $F$ is a finite
field of characteristic $p$, with $p$ a prime
of good reduction for our model $X=X_0(N)$.
Our strategy is to carry out as much of the computations
over finite fields as possible. A particularly useful
fact for us is the following. Let $X$ be curve
defined over a number field $K$, let $p>2$
be a rational prime, let $\fp$ be a prime of $K$ above
$p$ of ramification degree $1$ and of good reduction for $X$.
Then a theorem of Katz \cite[appendix]{Katz} asserts that
the composition of natural maps 
$J(K)_\mathrm{tors} \hookrightarrow J(K) \rightarrow J(\F_\fp)$
is an injection. In particular we may identify $J(K)_\mathrm{tors}$
as a subgroup of $J(\F_\fp)$.

\subsection{A closer look at the rational cuspidal subgroup}
\label{sub:rcs}
To ease notation we shall write $X$ and $J$ for $X_0(N)$
and $J_0(N)$.
Let $d^2$ be the largest square divisor of $N$, and let
$K=\Q(\zeta_d)$ be the $d$-th cyclotomic field. The cusps
of $X$ are all defined over $K$. Let these
be $P_0,\dotsc,P_k$ with $P_0$ defined over
$\Q$ (there are always at least two
rational cusps: the $\infty$ cusp and the $0$ cusp).  
Henceforth $P_0$ will be the base-point for the
Abel--Jacobi map $X \rightarrow J$. Let
\[
\cD=\sum_{i=1}^{k} \Z \cdot (P_i-P_0). 
\]
This is the subgroup of $\Div^0(X/K)$ supported on the cusps.
Write $G=\Gal(K/\Q)$; this acts naturally on $\cD$
and we denote by $\cD^G$ the subgroup of divisors 
that are stable under $G$ (these are the degree $0$,
$\Q$-rational divisors supported on the cusps). 
Write 
\[
C^\prime=\{ [D] \; : \; D \in \cD^G\}
\]
for the image of $\cD$ in $J(\Q)$. Now $G$
also acts on the subgroup
\[
\cE=\sum_{i=1}^k \Z \cdot [P_i-P_0]
\]
of $J(K)$. The group $\cE^G \subset J(\Q)$ is precisely the rational
cuspidal subgroup $C=C_0(N)(\Q)$. Of course $C^\prime$ is contained
inside $C$, and it is natural to ask if they are equal. 
\begin{lemma}\label{lem:cusps}
Let $N$ be one of the values in Lemma~\ref{lem:values}.
Then $C=C^\prime$. In other words, every degree $0$
rational divisor class supported on the cusps is
the class of a degree $0$ rational divisor supported
on the cusps.
\end{lemma}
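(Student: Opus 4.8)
The plan is to prove the nontrivial inclusion $C\subseteq C'$; the reverse inclusion $C'\subseteq C$ is immediate, since a $G$-stable divisor supported on the cusps has a $G$-stable class. First I would dispose of the levels for which the cusps are already rational: $K=\Q(\zeta_d)=\Q$ exactly when $d\le 2$, which covers the squarefree $N$ together with $N=44,52,56$. For these $G$ is trivial, so $\cD^G=\cD$, $\cE^G=\cE$, and $C=C'$ with nothing to prove. This leaves the seven levels $N\in\{45,54,63,64,72,75,81\}$.

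For these I would set up the exact sequence of $G$-modules
\[
0\longrightarrow R\longrightarrow \cD \xrightarrow{\ \phi\ } \cE \longrightarrow 0,
\]
where $\phi$ sends a cuspidal divisor to its class in $J(K)$ and $R=\Ker\phi$ is the lattice of relations among the cusp classes. The key structural observation is that $\cD=\bigoplus_{i=1}^k \Z\,(P_i-P_0)$ is a permutation $G$-module: $G$ fixes $P_0$ and permutes the remaining cusps, hence permutes the displayed basis. By Shapiro's lemma each orbit summand contributes $H^{1}(G_i,\Z)=\Hom(G_i,\Z)=0$ for its finite stabilizer $G_i$, so $H^{1}(G,\cD)=0$. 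The long exact cohomology sequence then collapses to
\[
0\to R^G\to \cD^G\to \cE^G\to H^{1}(G,R)\to 0,
\]
and since $C'=\{[D]:D\in\cD^G\}$ is precisely the image of $\cD^G\to\cE^G$ while $C=\cE^G$, this identifies $C/C'\cong H^{1}(G,R)$. Thus the lemma reduces to verifying $H^{1}(G,R)=0$ at each of the seven levels.

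To make this a finite computation I would first note that $C/C'$ is annihilated by $|G|=\varphi(d)$: for $[D]\in\cE^G$ the averaged divisor $\sum_{\sigma\in G}\sigma D$ lies in $\cD^G$ and represents $|G|\,[D]$, so $|G|\cdot C\subseteq C'$. Hence only the primes dividing $\varphi(d)$ (here only $2$ and $3$) are relevant. The cusps themselves are known explicitly as the poles of the $j$-map from Section~\ref{section:models}, so the classes $[P_i-P_0]$ and all $\Z$-linear relations among them can be determined; concretely I would reduce modulo a prime $\fp$ of $K$ of good reduction and residue characteristic $>2$ and compute inside the finite group $J(\F_\fp)$, where by Katz's theorem \cite{Katz} the torsion (hence cuspidal) subgroup injects, so no relation is lost. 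This pins down $R$ as a $G$-lattice, after which $H^{1}(G,R)$ is a routine finite computation — for cyclic $G=\langle\sigma\rangle$ it is simply $\Ker(N_G)/(\sigma-1)R$.

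The main obstacle is the faithful determination of $R$ as a Galois module: one must capture \emph{all} relations among the cusp classes together with the $G$-action, which is exactly why the passage to a good finite field (and the appeal to Katz's injectivity) is needed rather than attempting the computation over $K$ directly. Once $R$ is in hand the vanishing $H^{1}(G,R)=0$ is mechanical, and I expect it to hold in each of the seven cases, yielding $C=C'$.
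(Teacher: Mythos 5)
Your argument is correct, but it takes a genuinely different route from the paper. The paper's proof is a direct computation: for each $N$ it forms the subgroup $\cF=\bigcap_{\sigma\in G}\Ker(\mu_\sigma)\subseteq\cD$ of cuspidal divisors whose \emph{class} is $G$-stable (testing $[D]^\sigma=[D]$ inside $J(\F_\fp)$ via Katz's injectivity of torsion under reduction at an unramified prime $\fp\nmid 2N$), notes that the image of $\cF$ in $J(\F_\fp)$ is isomorphic to $C$ while the image of $\cD^G$ is isomorphic to $C'$, and simply checks that these two finite subgroups coincide. You instead package the obstruction cohomologically: from $0\to R\to\cD\to\cE\to 0$ and the vanishing $H^1(G,\cD)=0$ (valid since $\cD$ is a permutation module on the basis $P_i-P_0$, as $G$ fixes the rational cusp $P_0$) you get $C/C'\cong H^1(G,R)$, reduce further by noting $|G|\cdot C\subseteq C'$, and then must determine the relation lattice $R$ as a $G$-module --- which again forces you back to Katz and reduction mod $\fp$, the same essential input as the paper. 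Your approach buys a conceptual identification of $C/C'$, the a priori restriction to the primes $2$ and $3$, and the observation that the lemma is vacuous for the levels with $d\le 2$ (all squarefree $N$ together with $44$, $52$, $56$), none of which the paper exploits; the paper's approach buys a shorter computation, since comparing two subgroups of $J(\F_\fp)$ is cheaper than pinning down all of $R$ with its Galois action and then running a cohomology calculation. Two small caveats: for $N=64$ one has $G\cong(\Z/2\Z)^2$, so your closing formula for $H^1$ of a cyclic group does not apply there (the computation is still routine, e.g.\ via a free resolution); and, as with the paper's proof, the final numerical verification that the obstruction vanishes in each of the seven remaining cases still has to be carried out --- your "I expect it to hold" is where the paper actually runs the machine.
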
 
The lemma will bring some simplifications to our later
computations comparing $C$ with the torsion subgroup.
We note in passing that as $X(\Q) \ne \emptyset$,
it is known \cite[Section 3]{PoonenSchaefer} that every 
degree $0$ rational divisor class is the class of a
degree $0$ rational divisor. However, applying
this to a degree $0$ rational divisor class supported
on the cusps does yield a degree $0$ rational divisor
defining the same class, but that divisor need not
be supported on the cusps.
\begin{proof}[Proof of Lemma~\ref{lem:cusps}]
Since we have the cusps as points on $X$ with coordinates
in $K$, we can compute $\cE^G$.

Now let $p\nmid 2N$ be a rational prime  and let $\fp$
be a prime of $K$ above $p$. In particular $p \nmid d$
and so $\fp$ is an unramified prime. It follows from the
aforementioned theorem of Katz that the reduction
modulo $\fp$ map $\pi \; : \; J(K)_{\mathrm{tors}} \rightarrow J(\F_\fp)$ is injective.
For $\sigma \in G$ we let 
\[
\mu_\sigma \; : \; \cD \rightarrow J(\F_\fp), \qquad
D \mapsto \pi(D^\sigma-D).
\]
It follows from the injectivity of $\pi$ that
$[D]^\sigma=[D]$ if and only if  $\mu_\sigma(D)=0$.
Let
\[
\cF=\bigcap_{\sigma \in G} \Ker(\mu_\sigma).
\]
This is precisely the subgroup of $\cD$ of divisors representing
rational divisor classes. The image of $\cF$ in $J(\F_\fp)$
lands in fact inside $J(\F_p)$ and is isomorphic to $C$.
The image of $\cE^G$ inside $J(\F_\fp)$ is contained
in the image of $\cF$ and is isomorphic to $C^\prime$. For each $N$
we made a suitable choice of $p$, $\fp$ and computed
both these images and checked that they are equal.
Thus $C=C^\prime$. 
\end{proof}
Having established the equality $C=C^\prime$, we have 
another more convenient way of thinking of $C$.
Let $\cP_0,\dotsc,\cP_r$ be the cusp places on $X/\Q$,
with $\cP_0=P_0$ as before a cusp place of degree $1$.
From the equality $C=C^\prime$ we now know that
\[
C=\sum_{i=1}^r \Z (\cP_i - \deg(\cP_i) \cdot \cP_0).
\]
Thus for any $p \nmid N$, to compute the image of 
$C$ in $J(\F_p)$ we merely take the subgroup 
generated by the reductions of $\cP_i-\deg(\cP_i) \cdot \cP_0$..

\subsection{The real torsion subgroup of $J_0(N)$}
Let $N$ be a positive integer and let $g$ be the genus of $X_0(N)$.
The torsion subgroup of $J_0(N)(\C)$
is isomorphic to $(\Q/\Z)^{2g}$. So the group $J_0(N)(\Q)_\mathrm{tors}$
isomorphic to a product of $\Z/d_1\Z \times \cdots \times \Z/d_{2g}\Z$ with
$d_1 \mid d_2 \mid \cdots \mid d_{2g}$. However 
$J_0(N)(\Q)_\mathrm{tors}$ is contained in the torsion subgroup 
of $J_0(N)(\R)$ and we can use this to deduce that 
$d_1,\dotsc,d_g \in \{1,2\}$, and often in fact to cut
down the number of possibilities for $d_1,\dotsc,d_g$
as we shall see below. 

We shall need the following theorem of Snowden \cite{Snowden},
which tells us the number connected components of $X_0(N)(\R)$.
\begin{thm}[Snowden]\label{thm:Snowden}
Let $N$ be a positive integer. If $N$ is a power of $2$
then $X_0(N)$ has one real component. Otherwise
let $n$ be the number of odd prime divisors of $N$.
Let $\epsilon=1$ if $8 \mid N$ and $\epsilon=0$ otherwise.
Then $X_0(N)$ has $2^{n+\epsilon-1}$ real components.
\end{thm}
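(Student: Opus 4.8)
The plan is to reduce the statement to a topological count of the connected components of the compact one-manifold $X_0(N)(\R)$, and to compute that count by combining a local, prime-by-prime analysis of real moduli with a global monodromy argument. Two complementary descriptions of the real locus are useful. First, pulling back to the extended upper half-plane $\bbH^*$, the complex conjugation attached to the $\Q$-structure of $X_0(N)$ is induced by $z \mapsto -\bar z$, so a point $[z]$ lies in $X_0(N)(\R)$ precisely when $\gamma(-\bar z)=z$ for some $\gamma \in \Gamma_0(N)$; writing $\gamma=\begin{pmatrix} a & b \\ c & d\end{pmatrix}$ and separating real and imaginary parts forces $a=d$ and $c|z|^2-2ax+b=0$, so the fixed set of each antiholomorphic involution $z\mapsto \gamma(-\bar z)$ is a geodesic of $\bbH$. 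Thus the preimage of $X_0(N)(\R)$ is a $\Gamma_0(N)$-invariant union of geodesics, and $X_0(N)(\R)$ is its image in the quotient. Second, and more convenient for bookkeeping, I would use the moduli interpretation: a coarse real point corresponds to a pair $(E,C)$ with $(E,C)\cong(\bar E,\bar C)$, whence $j(E)\in\R$, and organise the whole locus through the map $j\colon X_0(N)(\R)\to X(1)(\R)=\PP^1(\R)$, realising $X_0(N)(\R)$ as the real locus of a branched cover of a single circle.

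For the second step I would analyse this real branched cover. Away from the three special values $j=0,1728,\infty$ the cover $X_0(N)\to X(1)$ is unramified, so over each of the three open arcs of $\PP^1(\R)\setminus\{0,1728,\infty\}$ the set $X_0(N)(\R)$ is a trivial covering whose sheets I would enumerate by classifying the real structures on the datum $(E,C)$. Here $E$ has a real model whose real locus $E(\R)$ is connected or has two components according to the sign of the discriminant, equivalently according to the arc of the $j$-line, and a real cyclic subgroup of order $N$ is specified prime by prime by how its $p$-primary part sits relative to this real structure. Each odd prime $p\mid N$ contributes an independent binary invariant and hence a factor of $2$, while the prime $2$ behaves more delicately and accounts for the exceptional role of $8\mid N$ through the extra invariant $\epsilon$; this is the source of the $2^{n+\epsilon}$ strands present before any gluing.

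The final step, and the main obstacle, is to determine how these strands glue into circles at the ramification points $j=0$, $j=1728$ and the cusp $j=\infty$, since it is the number of circles, not of strands, that the theorem computes. Concretely I would compute the monodromy permutations of the real cover around each special point: at $j=1728$ (an elliptic point of order $2$) and $j=0$ (order $3$) the local behaviour is governed by the action of the extra automorphisms of $E$ on the cyclic subgroup, and at the cusp by the action on the $\Gamma_0(N)$-cusps, equivalently by the Atkin--Lehner combinatorics. The claim to be established is that these gluings always pair the $2^{n+\epsilon}$ strands so as to produce exactly $2^{n+\epsilon-1}$ circles, the single global halving reflecting that the base $\PP^1(\R)$ is one circle and that one relation among the strands is forced. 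Carrying this out rigorously is the delicate part: one must track the monodromy uniformly in $N$, verify that no further merging or splitting occurs, and handle the $2$-adic case separately, checking in particular that when $N$ is a power of $2$ all strands coalesce into a single component. I would use small cases as consistency checks, namely $N$ prime (where $n=1$, $\epsilon=0$ predicts one component) and $N=pq$ (where $2^{1}=2$ components are predicted), and I would isolate the cusp-gluing computation as the step most in need of careful justification.
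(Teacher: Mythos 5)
This theorem is not proved in the paper at all: it is quoted verbatim from Snowden's paper \cite{Snowden}, and the authors use it as a black box (via Theorem~\ref{thm:GH}) to constrain $J_0(N)(\R)_{\mathrm{tors}}$. So there is no internal proof to measure your attempt against; your proposal has to stand on its own as a proof of Snowden's result, and it does not yet do so.

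The genuine gap is that the decisive step is deferred rather than carried out. Your set-up is reasonable: describing $X_0(N)(\R)$ as the image of a $\Gamma_0(N)$-invariant union of geodesics, or as the real fibre of $j\colon X_0(N)\to X(1)$ over $\PP^1(\R)$, with two conjugation-stable cyclic subgroups of order $p$ for each odd $p\mid N$, is a correct starting point. But the entire content of the theorem is the passage from the number of \emph{strands} over the arcs of $\PP^1(\R)\setminus\{0,1728,\infty\}$ to the number of \emph{closed circles}, and you explicitly leave the gluing at $j=0$, $j=1728$ and the cusps as ``the claim to be established.'' The heuristic that ``one relation among the strands is forced,'' halving $2^{n+\epsilon}$ to $2^{n+\epsilon-1}$, is not an argument: a priori the gluing permutations could yield anywhere between $1$ and $2^{n+\epsilon}$ circles, and the answer genuinely depends on the arithmetic of $N$ --- witness the separate clause for $N$ a power of $2$, where all strands must coalesce into a single component. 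The cusp gluing in particular requires knowing which cusps of $\Gamma_0(N)$ are real and how the real arcs approach them, a nontrivial computation in terms of the standard parametrization of cusps, and nothing in the proposal addresses it. Finally, the strand count $2^{n+\epsilon}$ itself is asserted rather than derived: the number of conjugation-stable cyclic subgroups of $2$-power order depends on the $2$-adic valuation of $N$ and on the component structure of $E(\R)$, which is exactly where the $8\mid N$ dichotomy enters; you flag this as ``delicate'' but do not resolve it. As written, the proposal is a plausible plan whose hard steps are all postponed, so it does not constitute a proof.
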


We shall also need the following well-known theorem,
which perhaps first appears in a paper of 
Gross and Harris \cite{GH}.
\begin{thm}\label{thm:GH}
Let $X/\R$ be a smooth curve with $X(\R) \ne \emptyset$.
Let $g$ be the genus of $X$, $m$ the number of its real
components and $J$ its Jacobian. Then
\[
J(\R) \cong (\R/\Z)^g \times (\Z/2\Z)^{m-1}.
\]
Thus
\[
J(\R)_{\mathrm{tors}} \cong (\Q/\Z)^g \times (\Z/2\Z)^{m-1}.
\]
\end{thm}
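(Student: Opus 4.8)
The plan is to treat the identity component of $J(\R)$ and the component group $\pi_0(J(\R))$ separately: the former will give the $(\R/\Z)^g$ factor and the latter the $(\Z/2\Z)^{m-1}$ factor. First I would record the soft structure. Since $J$ is an abelian variety of dimension $g$, the group $J(\R)$ is a compact commutative real Lie group of real dimension $g$; hence its identity component $J(\R)^0$ is a torus $(\R/\Z)^g$ and $\pi_0(J(\R))$ is finite. As $(\R/\Z)^g$ is divisible, it is an injective $\Z$-module, so the extension
\[
0 \to J(\R)^0 \to J(\R) \to \pi_0(J(\R)) \to 0
\]
splits and $J(\R) \cong (\R/\Z)^g \times \pi_0(J(\R))$; the stated description of $J(\R)_{\mathrm{tors}}$ is then immediate on passing to torsion subgroups. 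Everything therefore reduces to the identifications $J(\R)^0 \cong (\R/\Z)^g$ and $\pi_0(J(\R)) \cong (\Z/2\Z)^{m-1}$.

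Second I would make the identification cohomological. Put $G = \Gal(\C/\R) = \{1,\sigma\}$ and use the uniformisation $J(\C) = V/\Lambda$, where $V = \Lie J(\C) \cong \C^g$ and $\Lambda = H_1(X(\C),\Z)$; the resulting sequence $0 \to \Lambda \to V \to J(\C) \to 0$ is $G$-equivariant, with $\sigma$ acting conjugate-linearly on $V$. Because $V$ is a $\Q$-vector space it is uniquely divisible, so $H^1(G,V) = 0$, and the long exact cohomology sequence collapses to
\[
0 \to \Lambda^\sigma \to V^\sigma \to J(\R) \xrightarrow{\ \delta\ } H^1(G,\Lambda) \to 0.
\]
Here $V^\sigma \cong \R^g$ is the real form and $\Lambda^\sigma$ is a full lattice in it (its rank is $g$ since $\Lambda \otimes \R = V^\sigma \oplus i V^\sigma$ with each summand of dimension $g$), so the image of $V^\sigma$ is the identity component $J(\R)^0 \cong (\R/\Z)^g$, and $\pi_0(J(\R)) \cong H^1(G,\Lambda)$.

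Third, and at the heart of the matter, I would identify $H^1(G,\Lambda)$ with $(\Z/2\Z)^{m-1}$ geometrically. Because $X(\R) \neq \emptyset$, every class in $J(\R)$ is represented by a genuine real line bundle $L$ of degree $0$. Let $C_1,\dotsc,C_m$ be the real components of $X(\R)$, each a circle; restricting $L$ to $C_i$ gives $w_1(L|_{C_i}) \in H^1(C_i;\Z/2\Z) \cong \Z/2\Z$, and since $\sum_i w_1(L|_{C_i}) \equiv \deg L \equiv 0 \pmod 2$ this defines a homomorphism $\phi : J(\R) \to (\Z/2\Z)^{m-1}$ onto the sum-zero hyperplane. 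For real points $P \in C_i$ and $Q \in C_j$ with $i \neq j$, the class of $\O(P-Q)$ has image the generator supported on $\{i,j\}$, and such classes span; hence $\phi$ is surjective. Being continuous into a discrete group, $\phi$ kills $J(\R)^0$ and factors through $\pi_0(J(\R))$, giving $|\pi_0(J(\R))| \ge 2^{m-1}$.

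The main obstacle is the matching upper bound $|\pi_0(J(\R))| = |H^1(G,\Lambda)| \le 2^{m-1}$: the homomorphism $\phi$ alone cannot exclude extra components, and this is precisely where the topology of the real curve must be used. I would obtain it by putting the conjugation action of $\sigma$ on $\Lambda = H_1(X(\C),\Z)$ in Comessatti's normal form. For $X(\R) \neq \emptyset$ one can choose a basis in which $\Lambda$ decomposes, as a $\Z[G]$-module, into $m-1$ trivial summands $\Z$, $m-1$ sign summands $\Z^{\mathrm{sgn}}$, and $g-(m-1)$ regular summands $\Z[G]$ (the ranks tallying to $2g$, and both $\sigma$-eigenspaces having dimension $g$). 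Since $H^1(G,\Z)=0$ and $H^1(G,\Z[G])=0$ while $H^1(G,\Z^{\mathrm{sgn}}) \cong \Z/2\Z$, reading off cohomology summand-by-summand yields $H^1(G,\Lambda) \cong (\Z/2\Z)^{m-1}$, which closes the argument. The same count can instead be extracted from a Mayer--Vietoris or Smith-theory analysis of the double cover $X(\C) \to X(\C)/\sigma$ branched along $X(\R)$; either way it is this curve-topological input, rather than the formal homological algebra, that is the crux of the theorem.
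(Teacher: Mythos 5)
Your argument is correct, but it is a very different animal from what the paper does: the paper's entire proof is a two-line citation of Gross--Harris (\cite{GH}, Proposition 3.2 for the component count $2^{m-1}$, and Section 1 of the same paper for the structure of $J(\R)$), whereas you reconstruct the argument from scratch. Your reduction is cleanly executed: the exponential sequence $0 \to \Lambda \to V \to J(\C) \to 0$ with $G=\Gal(\C/\R)$ acting, the vanishing of $H^1(G,V)$ by unique divisibility, the identification of $V^\sigma/\Lambda^\sigma \cong (\R/\Z)^g$ with the identity component and of $\pi_0(J(\R))$ with $H^1(G,\Lambda)$, and the splitting via injectivity of the divisible group $(\R/\Z)^g$ are all sound, as is the Lefschetz/eigenspace bookkeeping behind the summand counts. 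The lower bound via $w_1$ of real line bundles restricted to the circles $C_i$ (using $X(\R)\ne\emptyset$ both to represent classes by honest real bundles and to produce the generators $[\O(P-Q)]$) is a nice explicit complement, though it becomes logically redundant once the Comessatti normal form is in hand. The one caveat is that you have not actually escaped the citation: the decomposition of $\Lambda$ as a $\Z[G]$-module into $m-1$ trivial, $m-1$ sign, and $g-m+1$ regular summands --- equivalently, that the number of sign summands is exactly $m-1$ --- is precisely the curve-topological content of Comessatti's theorem, i.e.\ the same input the paper imports from \cite{GH}. You flag this honestly as the crux, so the proposal is a correct and more transparent derivation, but it trades one black box (Gross--Harris, Proposition 3.2) for another (Comessatti's normal form) rather than eliminating it; what it buys is an explicit description of where each factor of $J(\R)$ comes from, which the paper's citation does not provide.
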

\begin{proof}
By \cite[Proposition 3.2]{GH}, the number
of connected components of $J(\R)$ is $2^{m-1}$.
The theorem follows from \cite[Section 1]{GH}.
\end{proof}

\subsection{Computing the possibilities for $J_0(N)(\Q)$}  
\begin{comment}
To ease notation we write $X=X_0(N)$ and $J=J_0(N)$
and $C=C_0(N)(\Q)$. We 
compute the poles of $j$ which are the cusp places; 
we fix one of these that has
degree $1$ and denote it by $\cP_0$ (this could for example
be the $\infty$ or $0$ cusp). Let $\cP_1,\dotsc,\cP_r$
be the other cusp places. Then $C$
is generated by the classes $[\cP_i-\deg(\cP_i) \cdot \cP_0]$
in $\Pic^0(X/\Q) \cong J(\Q)$. To determine
the structure of $C$ we choose a prime $p \nmid 2N$
and we compute using \texttt{Magma} $\Pic^0(X/\F_p) \cong J(\F_p)$.
The images of the classes $[\cP_i-\deg(\cP_i) \cdot \cP_0]$
under the composition
\[
C(\Q) \hookrightarrow J(\Q)_\mathrm{tors} \hookrightarrow J(\Q) \rightarrow J(\F_p)
\]
generate a subgroup of $J(\F_p)$ that is isomorphic to
$C$. From this we know the structure of 
$C$ as an abstract abelian group. 
\end{comment}
We return to $N$ being one of the values in Lemma~\ref{lem:values},
and we continue writing $X=X_0(N)$, $J=J_0(N)$ and $C=C_0(N)(\Q)$.
Recall that
$J(\Q)$ is finite for all values of $N$ we are considering.
Thus $C \subseteq J(\Q)_{\mathrm{tors}}=J(\Q)$.
In particular, the reduction modulo $p$ map $J(\Q) \rightarrow J(\F_p)$
is injective for $p\nmid 2N$. 
Let $\cA_p^{\prime}$ be set of homomorphisms $\iota: C \rightarrow A$
where $A$ is a subgroup of $J(\F_p)$ containing the image 
of $C$ under the reduction mod $p$ map, and $\iota$ is the restriction
of the mod $p$ map to $C$. 
Thus we know that for \emph{some} 
$\iota \in \cA_p^\prime$,
we have a commutative diagram 
\begin{equation}\label{eqn:CJA}
\xymatrix{
C \ar@{->}[dd]_{\iota}\ar@{^(->}[rr] & & J(\Q)\ar@{->}[dd]^{\text{red}}\ar@{=}[ddll]^{\mu}  \\
& & \\
A\ar@{^(->}[rr]  & &  J(\F_p) }
\end{equation}
where $\mu$ is an isomorphism. 
Let $g$ be the genus of $X$, and $m$ be the number of real components
of $J$ which maybe computed from Theorem~\ref{thm:Snowden}.
By Theorem~\ref{thm:GH}, we know that 
\[
J(\Q) \cong \Z/d_1\Z \times \cdots \times \Z/d_k \Z, \qquad
d_1 \mid d_2 \mid \cdots \mid d_k
\]
where $k \le g$ or
$g+1 \le k \le g+m-1$ and $d_1,\dotsc,d_{k-g} \in \{1,2\}$.
Thus we eliminate from $\cA_p^\prime$ all $\iota : C \rightarrow A$
where the isomorphism class of $A$ is incompatible with this information,
to obtain a subset $\cA_p$. 

Let $p_1,\dotsc,p_s$ be distinct primes
$\nmid 2 p N$. We let
$\cA_{p;\, p_1,\dotsc,p_{s}}$ be the
set of $\iota : C \rightarrow A$ in $\cA_p$ such that the following holds:
for each $p^\prime \in \{p_1,\dotsc,p_s\}$ there is some
$\iota^\prime : C \rightarrow A^\prime$ in $\cA_{p^\prime}$
and an isomorphism $\psi: A \rightarrow A^\prime$ making the diagram

\[
\xymatrix{
C \ar@{->}[drr]_{\iota^{\prime}}\ar@{->}[rr]^{\iota} & & A\ar@{->}[d]^{\psi} \\
& & A^{\prime} }
\]
commute. The existence of the isomorphism can be efficiently
decided using the
method explained in Section~\ref{sec:Ext}.
It is clear that there must be some $\iota: C \rightarrow A$
in $\cA_{p;p_1\dotsc,p_s}$ and an isomorphism
$\mu : A \rightarrow J(\Q)$
such that the diagram \eqref{eqn:CJA} commutes. Observe
that $\cA_{p;p_1,\dotsc,p_s}$ must contain some
$\iota_0: C \rightarrow A_0$ where $A_0$ is the image of $C$
under the reduction mod $p$ map.

Our hope is to find suitable $p$, $p_1,\dotsc,p_s$
such that $\cA_{p;p_1,\dotsc,p_s}$ contains precisely
one element, in which case this is necessarily $\iota_0$,
and we can then deduce that $J(\Q)=C$. In any case
we know that $J(\Q)/C$ is isomorphic to the cokernel
of some $\iota$ in $\cA_{p;p_1,\dotsc,p_s}$ which
allows us to deduce a positive integer $I$ such that
$I \cdot J(\Q) \subseteq C$.

\begin{lemma}\label{lem:C}
Let $N$ be one of the values given in Lemma~\ref{lem:values}.
Then $C$ and $J(\Q)/C$ are as given in the tables
of Section~\ref{sec:tables}.
\end{lemma}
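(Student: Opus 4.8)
The plan is to treat Lemma~\ref{lem:C} as a finite computation carried out separately for each value of $N$ in Lemma~\ref{lem:values}, assembling the ingredients developed in this section. For a fixed $N$ I would determine in turn (a) the isomorphism type of $C=C_0(N)(\Q)$ as an abstract abelian group, and (b) the quotient $J(\Q)/C$, from which the integer $I$ with $I\cdot J(\Q)\subseteq C$ is immediate.

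First I would compute $C$. By Lemma~\ref{lem:cusps} we have $C=C'$, so $C=\sum_{i=1}^r \Z(\cP_i-\deg(\cP_i)\cdot\cP_0)$ is generated by the classes of explicit cuspidal divisors on $X/\Q$. Choosing a prime $p\nmid 2N$ of good reduction, I would compute $\Pic^0(X/\F_p)\cong J(\F_p)$ using the \texttt{Magma} implementation of Hess' algorithm (the degree one place $\cP_0$ reduces to a degree one place, so this isomorphism is available). Since $J(\Q)=J(\Q)_{\tors}$ is finite and $p\nmid 2N$, Katz's theorem makes reduction modulo $p$ injective on $J(\Q)$, hence on $C$; the subgroup of $J(\F_p)$ generated by the reductions of the cuspidal classes is therefore isomorphic to $C$, and reading off its invariant factors gives the structure of $C$.

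Next I would pin down $J(\Q)/C$ using the sieving framework around diagram \eqref{eqn:CJA}. For a good prime $p$ I would form $\cA_p'$, the collection of restrictions $\iota:C\to A$ with $A$ a subgroup of $J(\F_p)$ containing the image of $C$, and then discard those $A$ whose isomorphism type is incompatible with the constraint on $J(\Q)$ coming from Theorem~\ref{thm:GH} together with the real-component count supplied by Snowden's Theorem~\ref{thm:Snowden}; this yields $\cA_p$. Running several auxiliary primes $p_1,\dots,p_s\nmid 2pN$ and testing, via Proposition~\ref{prop:lifting}, whether each surviving $\iota$ is compatible across all primes, I would cut $\cA_p$ down to $\cA_{p;p_1,\dots,p_s}$. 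When this set is a single element it must be $\iota_0$, forcing $J(\Q)=C$ (and thereby the relevant instances of the generalized Ogg conjecture in Theorem~\ref{thm:genOgg}); otherwise $J(\Q)/C$ is the common cokernel of the surviving maps and $I$ is its exponent.

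The main obstacle is the handful of levels for which the sieve does not collapse to a single element, namely those $N$ not appearing in Theorem~\ref{thm:genOgg}. For these I cannot prove $J(\Q)=C$, and the goal is instead to choose the primes $p,p_1,\dots,p_s$ well enough that all surviving $\iota$ share the same cokernel, so that $J(\Q)/C$ and the index $I$ are nonetheless determined unambiguously. A secondary, purely practical difficulty lies in the genus~$5$ cases: computing $J(\F_p)$ and repeatedly applying the test of Proposition~\ref{prop:lifting} is expensive, so the success of the argument depends on selecting primes for which $J(\F_p)$ is small enough to make the enumeration of subgroups and isomorphisms feasible while still being sharp enough to prune $\cA_p$ effectively.
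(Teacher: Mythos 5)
Your overall strategy is the one the paper follows: compute $C$ via Lemma~\ref{lem:cusps} and reduction modulo a good prime, then constrain $J(\Q)$ by forming $\cA_{p;p_1,\dotsc,p_s}$ using the real-torsion bound from Theorems~\ref{thm:Snowden} and~\ref{thm:GH} and the compatibility test of Proposition~\ref{prop:lifting} across several primes. The paper runs exactly this script with $p$ the smallest prime not dividing $2N$ and $p_1,\dotsc,p_s$ the primes $\le 17$ not dividing $2pN$, and (when the sieve does not collapse to a single element) simply records the list of possible cokernels rather than insisting they coincide, taking $I$ to be the least common multiple of their exponents. That difference in your framing is harmless.

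The genuine gap is at $N=45$ and $N=64$. For these two levels the tables assert the exact equality $J(\Q)=C$, yet the reduction-mod-$p$ sieve cannot deliver it, and no cleverer choice of primes will help. For $N=45$ one needs $J(\Q)[2]\cong(\Z/2\Z)^3$, but $\Q(J[2])=\Q(\sqrt{-3},\sqrt{5})$, so for every good prime $p$ the Frobenius is the identity or one of three involutions, giving $J(\F_p)[2]\cong(\Z/2\Z)^6$ or $(\Z/2\Z)^4$; the local data is therefore permanently too coarse. The paper resolves this by computing the full mod~$2$ representation of $J_0(45)$ from the $28$ bitangents of the plane quartic model (following Bruin--Poonen--Stoll) and intersecting the fixed spaces of the three nontrivial Galois elements. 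For $N=64$ the paper instead computes the cuspidal subgroup over $L=\Q(\sqrt{2})$, compares with $J(\R)[4]\cong(\Z/4\Z)^3$ to get $J(L)[4]=C_0(64)(L)$, and then takes Galois invariants to conclude $J(\Q)[4]\subseteq C$. Your proposal, which relies solely on the sieve, would leave these two rows of the tables unproved; some additional global input of this kind is unavoidable there.
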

\begin{proof}[Proof of Lemma~\ref{lem:C} and Theorem~\ref{thm:genOgg}]
We wrote a \texttt{Magma} script which for each value of $N$
computed $\cA_{p;p_1,\dotsc,p_s}$ where $p$ is the smallest
rational prime not dividing $2N$, and $p_1,\dotsc,p_s$
are the primes $\le 17$ not dividing $2pN$. This allowed us
to deduce the information in the tables except
for two cases, where $N=45$, $64$. For those two 
cases were able to improve on the information
given in by this method and deduce that $J(\Q)=C$.
We explain this below in Sections~\ref{sub:45} and~\ref{sub:64}.
\end{proof}

\subsection{The Mordell--Weil Group for $J_0(45)$}\label{sub:45}
Let $N=45$. The procedure explained above tells us that
$C \cong \Z/2\Z \times \Z/4\Z \times \Z/8\Z$, and
\[
J(\Q) \cong \Z/2\Z \times \Z/4\Z \times \Z/8\Z \quad \text{or} \quad 
\Z/2\Z \times \Z/2\Z \times \Z/4\Z \times \Z/8\Z
\quad \text{or} \quad 
\Z/2\Z \times \Z/4\Z \times \Z/4\Z \times \Z/8\Z .
\]
We would like to show that $J(\Q)=C$, and for this it 
is enough to show that $J(\Q)[2]=C[2]=(\Z/2\Z)^3$.
However for all primes $p \nmid N$ we tried
$J(\F_p)[2]=(\Z/2\Z)^6$ or $(\Z/2\Z)^4$,
and so it does not seem to be possible to prove
the desired conclusion using reduction modulo primes.
Instead we will compute the entire mod $2$ representation
of $J$
\[
\overline{\rho}_{J,2} \; : \; \Gal(\overline{\Q}/\Q) \rightarrow \Sp_6(\F_2)
\]
 and use this to deduce that $J(\Q)[2]=(\Z/2\Z)^3$.

The curve $X=X_0(45)$ is a smooth plane 
quartic. Our model for it is
\[
X \; : \; x^3 z - x^2 y^2 + x y z^2 - y^3 z - 5 z^4=0
\]
in $\PP^2$. A procedure for computing the mod $2$ representation
of Jacobians of smooth plane quartics is explained
by Bruin, Poonen and Stoll \cite[Section 12]{BPS} and we apply that
method to our situation. We wrote down the equations for the 
$28$ bitangents to $X$. We found that the field of definition of these
bitangents is $K=\Q(\sqrt{-3},\sqrt{5})$ and so $\Q(J[2])=K$.
In particular $\overline{\rho}_{J,2}$ factors through $\Gal(K/\Q)$,
and we continue to denote the corresponding representation
$\Gal(K/\Q) \rightarrow \Sp_6(\F_2)$ by $\overline{\rho}_{J,2}$.
Each bitangent $L$ meets $X$ in a divisor $(L.X)$ which has the form $2 D_L$
where $D_L$ is an effective degree $2$ divisor. 
We wrote down the set $\Sigma$ of all quadruples $\{L_1,\dotsc,L_4\}$
such that $D_{L_1}+\cdots+D_{L_4} \sim 2 \omega_X$, where $\omega_X$
is the canonical divisor on $X$. As predicted by \cite{BPS} this set
$\Sigma$ has cardinality $315$. Next we construct the graph $\cG$
whose vertices are the quaruples $Q \in \Sigma$, and where $Q \ne Q^\prime$
are connected by an edge if and only if $Q \cap Q^\prime \ne \emptyset$.
We computed the automorphism group $\Aut(\cG)$ using \texttt{Magma} (this routine
is an implementation of the algorithm described in \cite{McKay}), and found it
to be isomorphic to $\Sp_6(\F_2)$ as predicted by \cite{BPS}. Now
the action of $\Gal(K/\Q)$ on the lines naturally gives 
a representation $\Gal(K/\Q) \rightarrow \Aut(\cG) \cong \Sp_6(\F_2)$,
and this is precisely $\overline{\rho}=\overline{\rho}_{J,2} : \Gal(K/\Q) \rightarrow \Sp_6(\F_2)$ (up to conjugation inside
$\Sp_6(\F_2)$). 
Let $\tau_1$, $\tau_2$, $\tau_3$ be the elements of $\Gal(K/\Q)$ satisfying 
\[
\begin{cases}
\tau_1(\sqrt{-3})=\sqrt{-3}\\
\tau_1(\sqrt{5})=-\sqrt{5}\\
\end{cases}, 
\qquad
\begin{cases}
\tau_2(\sqrt{-3})=-\sqrt{-3}\\
\tau_2(\sqrt{5})=\sqrt{5}\\
\end{cases}, 
\qquad
\tau_3=\tau_1 \tau_2.
\]
Write $M_i=\overline{\rho}(\tau_i)$. We found that
\[
M_1=
\begin{pmatrix}
1 & 0 & 0 & 0 & 0 & 0\\
0 & 0 & 1 & 1 & 0 & 1\\
0 & 1 & 0 & 1 & 0 & 1\\
0 & 0 & 0 & 1 & 1 & 1\\
0 & 0 & 0 & 0 & 0 & 1\\
0 & 0 & 0 & 0 & 1 & 0\\
\end{pmatrix}, \quad
M_2=
\begin{pmatrix}
1 & 0 & 0 & 0 & 0 & 0\\
1 & 0 & 0 & 0 & 0 & 1\\
1 & 0 & 0 & 1 & 1 & 1\\
1 & 1 & 0 & 1 & 0 & 1\\
1 & 0 & 1 & 1 & 0 & 1\\
1 & 1 & 0 & 0 & 0 & 0\\
\end{pmatrix}, \quad
M_3=
\begin{pmatrix}
1 & 0 & 0 & 0 & 0 & 0\\
1 & 0 & 0 & 0 & 1 & 0\\
1 & 0 & 0 & 1 & 0 & 0\\
1 & 0 & 1 & 0 & 0 & 0\\
1 & 1 & 0 & 0 & 0 & 0\\
1 & 0 & 1 & 1 & 0 & 1\\
\end{pmatrix}.
\]
Write $V_i=\Ker(M_i-I)$ where $I \in \Sp_6(\F_2)$ is the identity matrix. Thus $V_i$
is isomorphic to the subspace of $J[2]$ fixed by $\tau_i$. We found that
$V_1$, $V_2$, $V_3$ are $4$-dimensional, but $V_1 \cap V_2 \cap V_3$
is $3$-dimensional. This proves that $J(\Q)[2]\cong (\Z/2\Z)^3$, and completes
the proof that $J(\Q)=C$. 

It is worth observing that if $p$
is any prime of good reduction, then the Frobenius element $\Frob_p \in \Gal(K/\Q)$
must either be the identity or one of the $\tau_i$; in the former case $J(\F_p)[2] 
\cong (\Z/2\Z)^6$ and in the latter case $J(\F_p)[2] \cong (\Z/2\Z)^4$.
This explains why we have been unable to use the reduction mod $p$
maps to precisely pin down $J(\Q)[2]$.

\subsection{The Mordell--Weil group for $J_0(64)$}\label{sub:64}
Let $N=64$. The procedure explained above gives us 
\[
C\cong \Z/2\Z \times (\Z/4\Z)^2,
\]
and 
\[
J(\Q) \cong \Z/2\Z \times (\Z/4\Z)^2,
\qquad \text{or}
\qquad
(\Z/4\Z)^3.
\]
We want to show that $J(\Q)=C$. For this it is enough
to show that $J(\Q)[4]=C$.
The cusps of $X_0(64)$ are defined over $K=\Q(\zeta_8)=\Q(i,\sqrt{2})$.
Let $L=\Q(\sqrt{2})$. Let $G=\Gal(K/\Q)$ and $H=\Gal(K/L) \subset G$.
The method explained in Subsection~\ref{sub:rcs}
computes $C_0(64)(K)$ (the full cuspidal group)
and then $C=C_0(64)(\Q)$ is obtained by taking $G$-invariants.
Instead we take $H$-invariants to obtain
$C_0(64)(L)$ and find $C_0(64)(L) \cong (\Z/4\Z)^3$. 
Note that this is contained in $J(L)[4]$. However,
from Theorems~\ref{thm:Snowden} and~\ref{thm:GH} 
we know that $J(\R)[4] \cong (\Z/4\Z)^3$. 
As $L \subset \R$ we deduce $J(L)[4]=C_0(64)(L) \cong (\Z/4\Z)^4$.
Hence $J(\Q)[4] \subseteq C_0(64)(K)$. 
Now taking $G$-invariants we have $J(\Q)[4] \subseteq C_0(64)(K)^G=C$.
This proves that $J(\Q)=C$.
    
\section{The Mordell--Weil Sieve}
Let $X/\Q$ be a curve of genus $\ge 3$ with Jacobian $J$,
and suppose $X$ is not hyperelliptic. In this section
we explain a version of the Mordell--Weil sieve for quadratic points
on $X$, under the assumption that $J(\Q)$ has rank $0$,
but without assuming full knowledge of $J(\Q)$.
Let $P_0 \in X(\Q)$. We use this to fix a map 
$X^{(2)} \rightarrow J$ given by $D \mapsto [D-2P_0]$.
%First we need a special case of the formal immersion criterion 
%due to Derickx. 

%Let $p \ge 3$ be a prime of good reduction.
%Let $\tilde{D} \in X^{(2)}(\F_p)$, and let $\omega$ be a regular
%differential for $X/\F_p$. Define $\aaa(\omega,\tilde{D})$
%as follows. If $\tilde{D}=\tilde{P}_1+\tilde{P}_2$ where $\tilde{P}_1$, $\tilde{P}_2$ are distinct points,
%then let $t_i$ be a uniformizer at $\tilde{P}_i$, and let $\alpha_i$
%be the value of $\omega/dt_i$ at $\tilde{P}_i$. 
%If $\tilde{D}=2\tilde{P}$ where $\tilde{P}$ is a point,
%then we take $t$ be a uniformizer at $P$ and we expand the function
%$\omega/dt=\alpha_1+\alpha_2 t+\alpha_3 t^2 \cdots$.
%In both cases we take $\aaa(\omega,\tilde{D})=(\alpha_1,\alpha_2)$. 
%The following is a special case of a much more general theorem
%due to Derickx \cite[Chapter 3, Proposition 3.7]{Derickx}.
%\begin{thm}[Derickx]\label{thm:immersion}
%Suppose $J(\Q)$ has rank $0$ and let $\omega_1,\dots,\omega_g$
%be a basis of regular differentials for $X/\F_p$. 
%Suppose the $g \times 2$ matrix $A(\tilde{D})=(\aaa(\omega_j,\tilde{D}))_{j=1,\dots,g}$
%has rank $2$. Then the map $X^{(2)} \rightarrow J$
%is a formal immersion at $\tilde{D}$. In particular $X^{(2)}(\Q)$
%contains at most one element that reduces to $\tilde{D}$ modulo $p$.
%\end{thm}
Let $\cK$ be a (known) set of rational effective divisors of degree $2$.
Let $G$ be a subgroup of $J(\Q)$ and $I$ a positive integer
such that $I \cdot J(\Q) \subseteq G$; again we assume that $G$ and $I$
are known, but that $J(\Q)$ is perhaps unknown. We will use our partial
knowledge of the Mordell--Weil group 
%together 
%with the above formal immersion
%criterion 
to sieve for unknown rational effetive degree $2$ divisors.
Let $p \ge 3$ be a prime of good reduction for $X$. Let
\[
\cV_p=\{\tilde{D} \in X^{(2)}(\F_p) \; : \; D \in \cK\},
\qquad
\cU_p=X^{(2)}(\F_p) \setminus \cV_p.
\]
\begin{lemma}\label{lem:nonhyp}
Let $D^\prime \in X^{(2)}(\Q)$ and suppose
$\tilde{D^\prime} \in \cV_p$. Then
$D^\prime \in \cK$.
\end{lemma}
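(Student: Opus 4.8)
The plan is to deduce the statement from two injectivity facts that are already at our disposal: the injectivity of $\iota : X^{(2)}(\Q) \to J(\Q)$, $D \mapsto [D-2P_0]$, which holds because $X$ is non-hyperelliptic (as recalled in the introduction), and the injectivity of reduction modulo $p$ on the torsion of $J$. First I would unwind the hypothesis. Saying $\tilde{D'} \in \cV_p$ means precisely that there exists a \emph{known} divisor $D \in \cK$ whose reduction agrees with that of $D'$, i.e. $\tilde{D'} = \tilde{D}$ in $X^{(2)}(\F_p)$. The whole point is then to upgrade this coincidence modulo $p$ into an equality $D' = D$ over $\Q$, which gives $D' \in \cK$.

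Next I would pass to the Jacobian. Since $X$ has good reduction at $p$, the map $\iota$ commutes with reduction, so $\widetilde{\iota(D')} = \widetilde{\iota(D)}$ in $J(\F_p)$, and hence the class $\iota(D') - \iota(D) = [D' - D] \in J(\Q)$ reduces to $0$ modulo $p$. This is where the rank-$0$ assumption of the section enters: $J(\Q) = J(\Q)_{\mathrm{tors}}$, so $[D'-D]$ is a torsion class. Because $p \ge 3$ is a prime of good reduction, the theorem of Katz quoted earlier guarantees that reduction is injective on $J(\Q)_{\mathrm{tors}}$, forcing $\iota(D') = \iota(D)$. Finally, injectivity of $\iota$ (non-hyperellipticity) yields $D' = D$, so $D' \in \cK$ as required.

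The only step that requires genuine care is the compatibility of reduction with $\iota$, namely that $\widetilde{\iota(D')}$ equals the image of $\tilde{D'}$ under the reduced Abel--Jacobi map. This follows from good reduction: the symmetric square $X^{(2)}$ and the morphism $\iota$ spread out over $\Z_{(p)}$, and functoriality of reduction on $\Z_{(p)}$-points then gives the identity. With this in hand the rest is immediate, so I do not anticipate a serious obstacle; the content of the lemma is really just the combination of non-hyperellipticity (injectivity of $\iota$) with Katz's theorem (injectivity of reduction on torsion), which together allow a coincidence of degree-$2$ divisors modulo $p$ to be promoted to a global equality. This is exactly the property that makes the Mordell--Weil sieve of this section valid: a $\Q$-rational effective degree-$2$ divisor whose reduction lies in $\cV_p$ cannot be new.
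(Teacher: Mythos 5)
Your proposal is correct and follows essentially the same route as the paper: reduce to $\tilde{D'}=\tilde{D}$ for some $D\in\cK$, use finiteness of $J(\Q)$ together with Katz's injectivity-of-torsion theorem to get $D'\sim D$, and then use non-hyperellipticity to promote linear equivalence to equality. The only cosmetic difference is that you invoke the injectivity of $\iota$ as stated in the introduction, whereas the paper's proof re-derives it on the spot via the Riemann--Roch space $\cL(D)$ having dimension $\ge 2$ and producing a degree-$2$ map to $\PP^1$ — the same argument in substance.
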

\begin{proof}
By defintion of $ \cV_p$ we have $\tilde{D^\prime}=\tilde{D}$
for some $D \in \cK$. Thus the divisor class $[D^\prime-D]$
is in the kernel of the reduction map $J(\Q) \rightarrow J(\F_p)$.
However, $J(\Q)$ is torsion. By the injectivity of torsion
\cite[appendix]{Katz} under mod $p$ reduction
we conclude that $D\sim D^\prime$. It will be enough to show
that $D=D^\prime$. Suppose otherwise, then the Riemann--Roch
space $\cL(D)$ has dimension at least $2$. Let $f \in \cL(D)$
be a non-constant function. Then $f : X \rightarrow \PP^1$
has degree $2$ contradicting the assumption that $X$
is non-hyperelliptic.
\end{proof}
%$\cV_p$ be the subset of $X^{(2)}(\F_p)$ consisting of divisors
%$\tilde{D}$ such that
%\begin{enumerate}
%\item $\tilde{D}$ is the reduction of some $D \in \cK$;
%\item $A(\tilde{D})$ has rank $2$.
%\end{enumerate}
%Note that $\cV_p$ is the subset of $X^{(2)}(\F_p)$ which are \lq\lq full-up\rq\rq \ in the following
%sense: each $\tilde{D} \in \cV_p$ is the reduction of some $D \in \cK$, and Theorem~\ref{thm:immersion} 
%tells us that there is no other $D^\prime \in X^{(2)}(\Q)$ reducing to $\tilde{D}$. In particular,
%if $D \in X^{(2)}(\Q) \setminus \cK$ then its mod $p$ reduction belongs to $\cU_p:=X^{(2)}(\F_p) \setminus \cV_p$.

\begin{lemma}\label{lem:mwsieve}
Let $p_1,\dots,p_r$ be primes $\ge 3$ of good reduction for $X$. Let
$\phi_i : G \rightarrow J(\F_{p_i})$ be the composition for the inclusion $G \hookrightarrow J(\Q)$
with the reduction modulo $p_i$ map $J(\Q) \rightarrow J(\F_{p_i})$. If $D \in X^{(2)}(\Q)\setminus \cK$
then
\begin{equation}\label{eqn:mwsieve}
I\cdot [D-2 P_0] \in \bigcap_{i=1}^r \phi_i^{-1} \left( \left\{I \cdot [ \tilde{D}-2 \tilde{P_0}] \; : \; 
\tilde{D} \in \cU_{p_i} \right\}\right) .
\end{equation}
\end{lemma}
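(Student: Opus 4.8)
The plan is to deduce the inclusion one prime at a time, importing all the real content from Lemma~\ref{lem:nonhyp}. First I would check that the left-hand side is a legitimate element of $G$ on which the maps $\phi_i$ are defined: since $[D-2P_0]\in J(\Q)$ and $I\cdot J(\Q)\subseteq G$ by hypothesis, we have $I\cdot[D-2P_0]\in G$. Membership in the displayed intersection \eqref{eqn:mwsieve} is then equivalent to checking, for each index $i$ independently, that $\phi_i(I\cdot[D-2P_0])$ lies in the finite set $\{I\cdot[\tilde{D}-2\tilde{P_0}]:\tilde{D}\in\cU_{p_i}\}\subseteq J(\F_{p_i})$.

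Next I would use that reduction modulo $p_i$ is compatible with the class map $X^{(2)}\rightarrow J$. Because $p_i\geq 3$ is a prime of good reduction, the divisor $D$ and the point $P_0$ reduce to an effective degree $2$ divisor and a rational point on $X_{\F_{p_i}}$, giving a well-defined class $[\tilde{D}-2\tilde{P_0}]\in J(\F_{p_i})$, and functoriality of the Abel--Jacobi construction under reduction yields $\phi_i(I\cdot[D-2P_0])=I\cdot[\tilde{D}-2\tilde{P_0}]$. Thus the statement for the prime $p_i$ collapses to the single assertion that the reduction $\tilde{D}$ lies in $\cU_{p_i}$, since then $\tilde{D}$ itself is the required witness exhibiting $I\cdot[\tilde{D}-2\tilde{P_0}]$ as a member of the target set.

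Finally I would invoke Lemma~\ref{lem:nonhyp} in its contrapositive form. If $\tilde{D}$ were to lie in $\cV_{p_i}$, that lemma would force $D\in\cK$, contradicting the hypothesis $D\in X^{(2)}(\Q)\setminus\cK$. Hence $\tilde{D}\in X^{(2)}(\F_{p_i})\setminus\cV_{p_i}=\cU_{p_i}$ for every $i$, and intersecting the resulting memberships over $i=1,\dots,r$ gives \eqref{eqn:mwsieve}.

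I do not expect a genuine obstacle here: the argument is essentially bookkeeping once Lemma~\ref{lem:nonhyp} is available, and that lemma already absorbs the two substantive facts, namely non-hyperellipticity (which rules out a degree $2$ map to $\PP^1$ and so forces $D=D^\prime$ rather than merely $D\sim D^\prime$) and the injectivity of torsion under reduction from \cite{Katz}. The only points demanding a little care are the compatibility of the class map $X^{(2)}\rightarrow J$ with reduction and the verification that good reduction at each $p_i$ makes $\tilde{D}$ a bona fide point of $X^{(2)}(\F_{p_i})$; both are standard, so I would state them briefly rather than belabour them.
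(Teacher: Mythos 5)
Your proposal is correct and follows essentially the same route as the paper's proof: the paper likewise observes that $I\cdot[D-2P_0]\in G$ because $I\cdot J(\Q)\subseteq G$, then invokes Lemma~\ref{lem:nonhyp} to place $\tilde{D}$ in $\cU_{p_i}$, and concludes via compatibility of reduction with the class map. You have merely written out in more detail the bookkeeping that the paper compresses into ``the proof follows easily.''
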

\begin{proof}
Since $I \cdot J(\Q) \subseteq G$, we know that $I \cdot [D- 2P_0] \in G$.
By Lemma~\ref{lem:nonhyp}, we know that the reduction $\tilde{D}$ of $D$ modulo $p_i$ belongs to
$\cU_{p_i}$. The proof follows easily.
\end{proof}

\section{Proof of Main Theorem}
Let $N$ be one of the values in Lemma~\ref{lem:values}.
Thanks to Lemma~\ref{lem:C}, we know a set that contains
all the possibilities for $J(\Q)/C$. We take $I$
to be the least common multiple of the exponents
of these groups. Thus $I \cdot J(\Q) \subseteq C$.
In each case we know the rational points
on $X$ and thus we have some collection
$\cK_0$ of effective degree $2$ divisors
of the form $P+Q$ where $P$, $Q$ are rational
points on $X$. We searched for quadratic points on
$P$ on $X$ and took $\cK$ to the the union of $\cK_0$
together with $P+P^\sigma$ where $P$ runs through the quadratic
points we have found, and $P^\sigma$ is Galois conjugate of $P$.
We took $\cK$ as our known set of degree $2$ divisors on $X$.
We then applied Lemma~\ref{lem:mwsieve} for a suitable
choice of primes $p_1,\dotsc,p_r \ge 3$ of good reduction,
and with $G=C$,
to deduce a subset of $\cS \subseteq J(\Q)$ given by \eqref{eqn:mwsieve}
that contains the possibilities for $I \cdot [D-2P_0]$
for $D \in J(\Q)\setminus \cK$. We found, for $N \ne 42$, $72$,
that $\cS =\emptyset$, and thus $J(\Q)=\cK$. The
two values $N=42$, $72$ needed virtually identical
arguments to complete the proof that $J(\Q)=\cK$.
We illustrate this now by giving a detailed account
of the computation for $X_0(72)$. 

\bigskip

The curve $X_0(72)$ has $8$ cusps of degree $1$ and $4$ cusp pairs defined over quadratic fields.
Let these be $P_0,\dotsc,P_7$ and $Q_i$, $Q_i^\prime$ where $i=1,2,3,4$ and 
$Q_i$, $Q_i^\prime$ are Galois conjugates.
The modular symbols algorithm shows that $J_0(72)$ is isogenous
to a product $E_1^2 \times E_2^2 \times E_3$ where $E_1$, $E_2$, $E_3$
are respectively the elliptic curves with Cremona labels \texttt{24A1}, \texttt{36A1},
\texttt{72A1}. As these three elliptic curves have rank $0$ so does $J_0(72)$.
We write $X=X_0(72)$ and $J=J_0(72)$.
Let $\cK$ 
consist of the known degree $2$ effective rational divisors:
$Q_i+Q_i^\prime$ for $i=1,\dotsc,4$ and $P_i+P_j$ where $0 \le i,j \le 7$.
Thus $\cK$ has $40$ elements. From our tables in Section~\ref{sec:tables}
we have
\[
C=\Z/2\Z \times \Z/4\Z \times \Z/12\Z \times \Z/12\Z, \qquad
J(\Q)/C=0 \quad \text{or} \quad \Z/2\Z.
\]
We take $G=C$ and $I=2$.
\begin{comment}
We let $G$ be the subgroup of $J(\Q)$ generated by $[P_j-P_0]$ with $j=1,\dotsc,7$
and $[Q_i+Q_i^\prime-2P_0]$ for $i=1,\dotsc,4$. As an abstract abelian group $G$
is isomorphic to $\Z/2\Z \times \Z/4\Z \times (\Z/12\Z)^2$. We find
\[
J(\F_5) \cong \Z/2\Z \times \Z/4\Z \times \Z/12\Z \times \Z/24 \Z 
\]
and
\[
J(\F_7) \cong \Z/2\Z \times (\Z/4\Z)^3 \times (\Z/24\Z)^2.
\]
It follows from the injectivity of the reduction mod $p$ map (for primes of good reduction $p \ge 3$)
that $[J(\Q) : G]=1$ or $2$. Letting $I=2$ we have $I \cdot J(\Q) \subseteq G$. 
\end{comment}
We applied Lemma~\ref{lem:mwsieve} with just one prime $p=5$.
We found that $\#X^{(2)}(\F_5)=64$. Of these $64$ divisors, $40$ are reductions of elements in $\cK$
and the $2 \times 5$ matrix $A(\tilde{D})$ has rank $2$ for all $40$. Thus $\# \cU_5$
has $24$ elements. However, only two elements of the set $\{2[\tilde{D}-2\tilde{P_0}] \; : \; \tilde{D} \in \cU_5\}$
are in the image of $\phi_5 \; : \; G \hookrightarrow J(\F_5)$. We let $A_1$, $A_2$ be their preimages in $G$ (which
we represent as divisors of degree $0$ on $X$). Thus if $D \in X^{(2)}(\Q) \setminus \cK$ then
$2D\sim A_i+4P_0$. We found that the Riemann--Roch spaces $L(A_i+4P_0)$ are both $2$-dimensional.
Suppose $i=1$ and let $f$, $g$ be a $\Q$-basis for $L(A_1+4P_0)$. Thus $2D=A_1+\divv(\alpha f+ \beta g)$
for some $(\alpha,\beta) \in \PP^1(\Q)$. We consider the  $1$-dimensional family of $0$-dimensional
subschemes $A_1+\divv(\alpha f+\beta g)$ of $X^{(2)}$ parametrized by $(\alpha,\beta) \in \PP^1$.
Computing and factoring the discriminant of this subscheme (as a homogeneous expression in $\alpha$, $\beta$)
shows that none of the elements of this $1$-dimensional family has the form $2D$ with $D \in X^{(2)}(\Q)$.
This shows that no element $D \in X^{(2)}(\Q)$ satisfies $2D \sim A_1+4P_0$. An identical argument
also allows us to deduce a contradiction for $i=2$. 
This shows that $X^{(2)}(\Q)=\cK$. Therefore there are no
non-cuspidal quadratic points on $X_0(72)$. 

\bigskip

\noindent \textbf{Remark.}
In the above computation for $X_0(72)$
we have applied the Mordell--Weil sieve with just one prime $p=5$.
In fact we have found in this case that using additional primes does not allow us to eliminate $A_1$, $A_2$
using just the Mordell--Weil sieve and it is instructive to ponder the reason for this.
In both cases $i=1$, $2$, the discriminant mentioned above has three quadratic factors 
which give divisors $D$ belonging to $X^{(2)}(\Q(\sqrt{-1}))$,
$X^{(2)}(\Q(\sqrt{3}))$ and $X^{(2)}(\Q(\sqrt{-3}))$ such that $2D \sim A_i+4P_0$. 
Since any prime $p \ge 5$ must split in at least one
of the fields $\Q(\sqrt{-1})$, $\Q(\sqrt{3})$, $\Q(\sqrt{-3})$ it follows that for any such $p$ (and for $i=1$, $2$)
there
is some $\tilde{D} \in X^{(2)}(\F_p)$ such that $2 \tilde{D} \sim \tilde{A_i}+4 \tilde{P}_0$. This fact can
be easily used to show that $A_1$, $A_2$ belong to the intersection in \eqref{eqn:mwsieve}
regardless of which primes $p_1,\dots,p_r \ge 5$ are chosen.

\section{Tables}\label{sec:tables}

%C-denotes the rational points of the cuspidal group, $C_0(N)(\Q)$.

\begin{table}[!htbp]
	\caption{\boldmath $X_0(34)$}
	    \begin{equation*}
	                \begin{split}
	                \text{Genus: \;} & 3 \\
\text{	Model: \;} & x^3z - x^2y^2 - 3x^2z^2 + 2xz^3 + 
	    3xy^2z - 3xyz^2 + 4xz^3 - y^4 + 
	    4y^3z - 6x^2z^2 + 4yz^3 - 2z^4 \\
J_0(34)(\Q)& = C \isom \Z/4\Z \times \Z/12\Z 
\end{split}
\end{equation*}

	\begin{tabular}{cccccc}
		\hline
		Name & $\T^2$ & Coordinates &$ j$-invariant & CM by & $\Q$-curve\\
		\hline \hline &&&&& \\
		$P_1$ & -1& $(\T+1,0,1)$ &  287496 & -16 & YES\\
		$P_2$ & -1& $(\frac{\T+1}{2},\frac{\T+1}{2},1)$ & 1728 & -4 & YES\\[1ex]  
		$P_3$ & -1 & $(\T,-\T,1)$ & 1728 & -4 & YES\\ [1ex]  
		$P_4$ & -2 & $(\frac{\T}{2},-\frac{\T}{2},1)$ & 8000  & -8 & YES\\[1ex]
		$P_5$ & -15 &  $(\frac{\T+11}{8},\frac{1}{2} ,1)$  & $\frac{2041\T + 11779 }{8}$ & NO & YES \\[1ex]
	
		$P_6$ & -15 & $(\frac{\T+23}{16},\frac{\T+7}{16},1)$ &  $\frac{-53184785340479\T - 7319387769191}{34359738368}  $& NO & YES \\[2ex]
	\hline
	\end{tabular} 
	
	\vspace{0.5cm}
\begin{tabular}{ccccc}
		\xymatrix{{P_1} \ar@{-}[d]_{w_{17}}\ar@{-}[r]^{w_{34}}\ar@{-}[dr] & {P_2} \ar@{-}[d]\ar@{-}[dl] _{ w_{2}\;\; \; }\\
			{P_1^{\sigma}} \ar@{-}[r] & {P_2^\sigma} } &
		\xymatrix{{P_5} \ar@{-}[d]_{w_{2}}\ar@{-}[r]^{w_{34}}\ar@{-}[dr] & {P_6} \ar@{-}[d]\ar@{-}[dl]_{w_{17}\;\; \; }\\
			{P_5^\sigma} \ar@{-}[r] & {P_6^\sigma}      } &
		\xymatrix{{P_3} \ar@{=}[d]^{w_{17},w_{34}} \ar@{-}[r]^{w_2} & {P_3}\ar@{=}[d]^{w_{17},w_{34}}
			\\ {P_3^\sigma}   \ar@{-}[r]^{w_2}  & {P_3^\sigma}  }  &
		\xymatrix{{P_4} \ar@{=}[d]^{w_{17},w_{34}} \ar@{-}[r]^{w_2} & {P_4}\ar@{=}[d]^{w_{17},w_{34}}
			\\ {P_4^\sigma}   \ar@{-}[r]^{w_2}  & {P_4^\sigma}  }  \\                 
		\end{tabular}  
         \end{table}
         
  \vspace{2cm}
         
%%%%%%%%%%%%%%%%%%%%%%%%%%%%%%%%%%%%%%%%%%%%%%%%%%%%%%%%%%%%%%%%%%%%

\begin{table}[!htbp]
                \caption{\boldmath $X_0(38)$}
                \begin{equation*}
                \begin{split}
                \text{Genus:\;}& 4 \\
                \text{Model:\;} & x_1x_3 - x_2^2 - x_2x_4 - x_3^2 - x_3x_4 - x_4^2, \\
                & x_1^2x_4 + x_1x_4^2 - x_2^3 + 3x_2^2x_3 + 2x_2^2x_4 - 
                                    3x_2x_3^2 - 4x_2x_3x_4 - 2x_2x_4^2 + x_3^3 + 2x_3^2x_4 + 
                                    2x_3x_4^2 + x_4^3 \\
            J_0(38)(\Q) & = C \isom \Z/3\Z \times \Z/45\Z 
            \end{split} \end{equation*}

                \begin{tabular}{cccccc}
               \hline
                Name & $\T^2 $& Coordinates & $j$-Invariant & CM by & $\Q$-curve\\
                \hline \hline &&&&& \\
                $P_1$ & -3& $(\frac{\T+1}{2} , 0 ,\frac{\T-1}{2} , 1)$ & 0 & -3 & YES\\[1ex]
                $P_2$ & -3 &  $(0 , \frac{-\T+1}{2}, 0 , 1)$ & 54000 & -12 & YES\\[1ex]
                $P_3$ & -2& $(\frac{\T+1}{3}, \frac{-\T+1}{3}, \frac{\T-1}{3} , 1)$ & 8000&  -8 & YES\\[1ex]
                \hline
                \end{tabular} 
                
 \begin{tabular}{cc}
                          
        \xymatrix{
                        {P_1} \ar@{-}[d]_{w_{19}}\ar@{-}[r]^{w_{2}}\ar@{-}[dr] & {P_2} \ar@{-}[d]\ar@{-}[dl]_{w_{38} \;\; \; }\\
                        {P_1^{\sigma}} \ar@{-}[r] & {P_2^\sigma} } &
                        
       \xymatrix{{P_3} \ar@{=}[d]^{w_{2},w_{38}} \ar@{-}[r]^{w_{19}} & {P_3}\ar@{=}[d] \\ {P_3^\sigma}   \ar@{-}[r]^{w_2}  & {P_3^\sigma}  }  \\                 
             
        \end{tabular}  
      
        \end{table}
        
        %%%%%%%%%%%%%%%%%%%%%%%%%%%%%%%%%%%%%%%%%%%%%%%%%%%%%%%%%%%%%%%%

     \vspace{1.5cm}

\begin{table}[!htbp]
	 \caption{\boldmath $X_0(42)$}
	 \vspace{-1cm}
	\begin{flalign*} 
\text{Genus :} & 5 \\
\text{Model: } &   x_1x_3 - x_2^2 + x_3x_4, \\
& x_1x_5 - x_2x_5 - x_3^2 + x_4x_5 - x_5^2, \\
& x_1x_4 - x_2x_3 + x_2x_4 - x_3^2 + x_3x_4 + x_3x_5 - x_4^2 -   2x_4x_5 \\
  C \isom  &  \Z/2\Z  \times \Z/2\Z \times \Z/12\Z \times \Z/48\Z \; \text{and\;} J_0(42)(\Q)/C  \isom 0 \; \text{or} \; \Z/2\Z \\
                             \end{flalign*}        
                                     \begin{tabular}{cccccc}
                                     \hline
                                     Name & $\T^2 $& Coordinates & $j$-Invariant & CM by & $\Q$-curve\\
                                     \hline &&&&&\\
                                     $P_1$ & -3& $(\frac{-\T+1}{2}, \frac{\T+1}{2} , \frac{\T-1}{2}, \frac{\T+1}{2} ,1)$ & 54000 & -12 & YES\\[1ex]
                                       $P_2$ & -3& $(2, \frac{\T+1}{2}, \frac{\T-1}{2} , -1 ,1)$ & 0 & -3 & YES\\  [1ex]
                                     \hline
                                     \end{tabular} 
                                     
 \begin{tabular}{c}
                                     
                                           \xymatrix{
                                             {P_1}\ar@{-}@(ul,ur)^{w_3}\ar@{=}[rr]^{w_7,w_{21}} \ar@{=}[dd]_{w_{14},w_{42}}\ar@{=}[ddrr]^{\;\;\;\; w_2,w_{6}}& & {P_1^\sigma} \ar@{=}[dd]\ar@{-}@(ul,ur)^{w_3}\ar@{=}[ddll]\\ & &\\
                                             {P_2}\ar@{-}@(dr,dl)^{w_3}\ar@{=}[rr]& & {P_2^\sigma}\ar@{-}@(dr,dl)^{w_3} } \\                 
                                      
                              \end{tabular}      
                              
                  \end{table}

                      %%%%%%%%%%%%%%%%%%%%%%%%%%%%%%%%%%%%%%%%%%%%%%%%%%%%%%%%%%%%%%%%%%%%%%%%%%%%%%%%

                       \begin{table}[!htbp]
                              	\caption{\boldmath $X_0(44)$}
                              	\begin{flalign*}
                             &	\text{Genus: \;}   4& \\
                              &	\text{Model: \;}  x_1^2x_4 - x_2^3 + x_3^2x_4 - 2x_4^3,
                              	                        \; \; \;x_1x_3 - x_2^2 + 2x_2x_4 - 3x_4^2 &\\
              &   J_0(44)(\Q) = C \isom \Z/5\Z \times  \Z/5\Z \times \Z/15\Z &
                             	\end{flalign*}
                              	\begin{tabular}{cccccc}
                              		\hline
                              		Name & $\T^2$ & Coordinates & $j$-invariant & CM by & $\Q$-curve\\
                              		\hline \hline &&&&& \\
                              		$P_1$ & -7& $(\frac{-\T+1}{2}, \frac{\T+1}{2} , 1, 1)$ & -3375 &  -7 & YES\\ [1ex]
                              		$P_2$ & -7& $(\frac{\T-1}{2} ,  \frac{\T+1}{2}, -1, 1)$ & 16581375 & -28  & NO\\[1ex]  
                              		$P_3$ & -7 & $(1, \frac{-\T+1}{2} ,  \frac{\T+1}{2},  1)$ & 16581375  & -28 & NO\\ [1ex]
                              		$P_4$ & -7 & $(-1, \frac{\T+1}{2}, \frac{\T-1}{2}, 1)$ &-3375  & -7  & YES\\[1ex]
                              		$P_5$ & -7 &  $(\T-2, -2, -\T-2, 1)$  & -3375  &  -7 & NO \\[1ex]
                              	    $P_6$ & -7 & $(-\T+2, -2, \T+2, 1)$ & -3375  & -7  & NO \\[2ex]
                              	\hline
                              	\end{tabular}
                              	\vspace{0.1cm}
                              	\begin{tabular}{ccc}
                              		
                              		\xymatrix{
                              			{P_1} \ar@{-}[d]_{w_{44}}\ar@{-}[r]^{w_{11}}\ar@{-}[dr]& {P_4} \ar@{-}[d]\ar@{-}[dl]_{w_{4}\;\;\;}\\
                              			{P_1^{\sigma}} \ar@{-}[r] & {P_4^\sigma} } &
                              		
                              		\xymatrix{
                              		{P_2} \ar@{-}[d] _{w_{44}} \ar@{-}[r]^{w_{11}} \ar@{-}[dr] & {P_3^\sigma}\ar@{-}[d]\ar@{-}[dl]_{w_{4}\;\;\;}
                              			\\ {P_5}   \ar@{-}[r] & {P_6^\sigma}  }  &
                              			
                              			\xymatrix{
                              		        		{P_2^\sigma} \ar@{-}[d]_{w_{44}}\ar@{-}[r]^{w_{11}} \ar@{-}[dr] & {P_3}\ar@{-}[d] \ar@{-}[dl]_{w_{4}\;\;\;}
                              		        			\\ {P_5^\sigma}   \ar@{-}[r] & {P_6}  } 
                              	    \\                 
                              		
                              	\end{tabular} 
                              \end{table}
                              
                        %%%%%%%%%%%%%%%%%%%%%%%%%%%%%%%%%%%%%%%%%%%%%%%%%%%%%%%%%%%%%%%%%%%%%%%%%%%%%%%%

\begin{table}[!htbp]
	\caption{\boldmath $X_0(45)$}
		 \vspace{-1cm}
\begin{flalign*}
& \text{Genus: } 3 \\
& \text{Model: } x^3z - x^2y^2 + xyz^2 - y^3z - 5z^4 \\
& J_0(45)(\Q) = C \isom \Z/2\Z \times  \Z/4\Z \times \Z/8\Z \\
\end{flalign*}
	 \vspace{-1cm}
	 
	\begin{tabular}{cccccc}
		\hline
		Name & $\T^2$ & Coordinates & $j$-invariant & CM by & $\Q$-curve\\
		\hline \hline &&&&&\\
		$P_1$ & -11& $(\frac{\T-1}{2}, 1 , 1)$ &  -32768 & -11 & YES\\[1ex]
		$P_2$ & -11& $(-1 ,  \frac{-\T+1}{2}, 1)$ &-32768 & -11 & YES\\[1ex]  
		$P_3$ & 13 & $(2,\frac{-\T-5}{2},1)$ & $\frac{1250637664527933\T - 4509238226399579}{64}$ & NO & YES\\ [1ex]
		$P_4$ & 13 & $( \frac{-\T+5}{2},-2,1)$ & $\frac{461373\T - 1664219}{4}$ & NO & YES\\[1ex]
		$P_5$ & -39 &  $(\frac{\T+13}{8},\frac{-\T-5}{4} ,1)$  & $\frac{2734106225\T + 43419758443 }{1024}$ & NO & YES \\[1ex]
		$P_6$ & -39 & $(\frac{\T+5}{4},\frac{-\T-13}{8},1)$ &  $\frac{-60355066783497695\T - 
		    1556546639145161477}{70368744177664}  $& NO & YES \\[2ex]
	\hline
	\end{tabular}
		\vspace{0.1cm}
	 \begin{tabular}{ccc}
			\xymatrix{
			{P_1} \ar@{-}[d]_{w_{45}}\ar@{-}[r]^{w_{5}}\ar@{-}[dr] & {P_2} \ar@{-}[d]\ar@{-}[dl]_{w_{9}\;\;\;}\\
			{P_1^{\sigma}} \ar@{-}[r] & {P_2^\sigma} } &
		
	\xymatrix{
		{P_3} \ar@{-}[d]_{w_9}\ar@{-}[r]^{w_{45}} \ar@{-}[dr] & {P_4}\ar@{-}[d] \ar@{-}[dl]_{w_{5}\;\;\;}
			\\ {P_3^\sigma}   \ar@{-}[r] & {P_4^\sigma}  }  &
			
		\xymatrix{{P_5} \ar@{-}[d]_{w_{9}}\ar@{-}[r]^{w_{5}}\ar@{-}[dr] & {P_6} \ar@{-}[d]\ar@{-}[dl]_{w_{45}\;\;\;}\\
						{P_5^\sigma} \ar@{-}[r] & {P_6^\sigma}      }  \\

	\end{tabular}  
	\end{table}
%%%%%%%%%%%%%%%%%%%%%%%%%%%%%%%%%%%%%%%%%%%%%%%%%%%%%%%%%%%%%%%%%%%%%%%%%%%%%%%%

	\vspace{-1cm}
           \begin{table}[!htbp]
                               	\caption{\boldmath $X_0(51)$}
                               		\vspace{-1cm}
                               	 \begin{flalign*}
&  \text{Genus: } 5 \\
& \text{Model: } x_1x_3 - x_2^2 + x_2x_4 - x_3^2 - x_3x_5 - x_4^2, \\
                               & 	x_1x_4 - x_2x_3 - x_3^2 - x_4x_5, \\
                               	& x_1x_5 - x_2x_4 - 2x_3^2 + x_3x_5 + x_4^2 - 2x_4x_5 \\
                 & J_0(51)(\Q) = C \isom \Z/8\Z \times  \Z/48\Z \\
                                \end{flalign*} 
                              
                              % Primes used in Mordell-Weil Sieve: 2, 5, 7, 11, 13, 19, 23
                               	\vspace{-0.5cm}
                               	
                               	\begin{tabular}{cccccc}
                               		\hline
                               		Name & $\T^2$ & Coordinates & $j$-invariant & CM by & $\Q$-curve\\
                               		\hline \hline &&&&& \\
                               		$P_1$ & -2& $(0, \T/2, \T/2, 1,1 )$ & 8000 &  -8 & YES\\[1ex]
                               		$P_2$ & -2& $(\frac{-\T+1}{9} ,  \frac{2\T-1}{9}, \frac{2\T-1}{9}, \frac{\T+4}{9}, 1)$ & 8000 & -8  & NO\\[1ex]  
                               		$P_3$ & 17 & $( \frac{\T+5}{2} ,  \frac{-\T-3}{4},\frac{\T+3}{4}, 0,  1)$ & $-671956992\T- 2770550784 $  & -51 & NO\\ [1ex]
                               	
                               	\hline
                               	\end{tabular}
                               	\vspace{0.2cm}
                               	\begin{tabular}{cc}
                               		
                               		\xymatrix{
                               			{P_1} \ar@{-}[d]_{w_{51}}\ar@{-}[r]^{w_{17}}\ar@{-}[dr] & {P_2} \ar@{-}[d]\ar@{-}[dl]_{w_{3}\;\;\;}\\
                               			{P_1^{\sigma}} \ar@{-}[r] & {P_2^\sigma} } &
                               		
                               		\xymatrix{
                               		{P_3}\ar@{-}@(ul,ur)^{w_{51}}  \ar@{=}[r]^{w_3, w_{17}} & {P_3^\sigma}\ar@{-}@(ul,ur)^{w_{51}}  \\ } 
                               			
                               	    \\                 
                               		
                               	\end{tabular} 
                               \end{table}

                         %%%%%%%%%%%%%%%%%%%%%%%%%%%%%%%%%%%%%%%%%%%%%%%%%%%%%%%%%%%%%%%%%%%%%%%%%%%%%%%% 
                          \begin{table}[!htbp]
                          	\caption{\boldmath $X_0(52)$}
                          		\vspace{-1.0cm}
                          	\begin{flalign*}
                          &	\text{Genus: } 5 \\
                & \text{Model: } x_1x_3 - x_2^2 - x_3^2 - x_4^2, \\
              & x_1x_4 - x_2x_3 + x_2x_5 - x_4x_5, \\
                          	& x_1x_5 - x_2x_4 - 2x_3^2 + x_3x_5 - x_5^2 \\
                          & J_0(52)(\Q) = C \isom \Z/21\Z \times  \Z/42\Z \\
                          \end{flalign*}
                       
                          	\begin{tabular}{cccccc}
                          		\hline
                          		Name & $\T^2$ & Coordinates & $j$-invariant & CM by & $\Q$-curve\\
                          		\hline \hline &&&&& \\
                          		$P_1$ & -1& $(\T+1,1,0,\T,1)$ & 287496 &  -16 & YES\\
                          		$P_2$ & -1& $(\frac{-\T+1}{2} ,  0, \frac{-\T+1}{2}, 0, 1)$ & 287496& -16  & YES\\[1ex]  
                          		$P_3$ & -1 & $(\T+1, -1, 0 ,-\T,   1)$ & 1728  & -4 & YES\\ 
                          		$P_4$ & -3 & $(0, -1, \frac{\T+1}{2}, \frac{\T-1}{2}, 1)$ & 54000  & -12 & YES\\[1ex]
                          		$P_5$ & -3 &  $(0, 1, \frac{\T+1}{2}, \frac{-\T+1}{2}, 1)$  & 54000  &  -12 & YES \\[1ex]
                          	    
                          	\hline
                          	\end{tabular}
                          
                          	\begin{tabular}{ccc}
                          		\xymatrix{
                          		                       		{P_1}\ar@{-}@(ul,ur)^{w_{4}}  \ar@{=}[rr]^{w_{13}, w_{52}} & & {P_1^\sigma}\ar@{-}@(ul,ur)^{w_4}}  &
                          		
                          		\xymatrix{
                          		{P_2} \ar@{-}[d]_{w_{13}} \ar@{-}[r]^{w_{4}} \ar@{-}[dr] & {P_3}\ar@{-}[d]\ar@{-}[dl]_{w_{52}\;\;\;}
                          			\\ {P_2^\sigma}   \ar@{-}[r] & {P_3^\sigma}  }  &
                          			
                          			\xymatrix{
                          		        		{P_4} \ar@{-}[d]_{w_{52}} \ar@{-}[r]^{w_{4}} \ar@{-}[dr] & {P_5}\ar@{-}[d]\ar@{-}[dl]_{w_{13}\;\;\;}
                          		        			\\ {P_4^\sigma}   \ar@{-}[r] & {P_5^\sigma}  } 
                          	    \\                 
                          		
                          	\end{tabular} 
                          \end{table}
                          
                    %%%%%%%%%%%%%%%%%%%%%%%%%%%%%%%%%%%%%%%%%%%%%%%%%%%%%%%%%%%%%%%%%%%%%%%%%%%%%%%%

        \begin{table}[!htbp]
                        \caption{\boldmath $X_0(54)$}
                          \begin{flalign*} 
                        &  \text{Genus: } 4 \\
                     &  \text{Model: } x_1^2x_3 - x_1x_3^2 - x_2^3 + x_2^2x_4 - 3x_2x_4^2 + x_3^3 + 3x_4^3,\\
             & x_1x_4 - x_2x_3 + x_3x_4 \\
                      & J_0(54)(\Q) = C \isom \Z/3\Z \times \Z/3\Z \times \Z/9\Z \\
                      \end{flalign*}

                        \begin{tabular}{cccccc}
                        \hline
                        Name & $\T^2 $& Coordinates & $j$-Invariant & CM by & $\Q$-curve\\
                        \hline
                        $P_1$ & -2& $(-2, \T+1 , \T ,1)$ & 8000 & -8 & YES\\
                            
                        \hline
                        \end{tabular}
          
            \vspace{0.2cm}
        
        \begin{tabular}{c}
                        
                              	\xymatrix{
                                                		                       		{P_1}\ar@{-}@(ul,ur)^{w_{2}}  \ar@{=}[rr]^{w_{27}, w_{54}} & & {P_1^\sigma}\ar@{-}@(ul,ur)^{w_2}} \\                 
                         
                 \end{tabular}           
     \end{table}
     
       %%%%%%%%%%%%%%%%%%%%%%%%%%%%%%%%%%%%%%%%%%%%%%%%%%%%%%%%%%%%%%%%%%%%%%%%%%%%%%%%
       
         \begin{table}\label{55}
                                     	\caption{\boldmath $X_0(55)$}
                                     	       \begin{flalign*}
                                        & \text{Genus: } 5 \\
                                    &	\text{Model: } x_1x_3 - x_2^2 + x_2x_4 - x_2x_5 - x_3^2 + 3x_3x_4 + x_3x_5 -  2x_4^2 - 4x_5^2, \\
                                     	& 	x_1x_4 - x_2x_3 + 2x_2x_4 - 2x_2x_5 - 2x_3^2 + 4x_3x_4 + 
                                     	    5x_3x_5 - 2x_4^2 - 4x_4x_5 - 3x_5^2, \\
         &  x_1x_5 - 2x_2x_5 - x_3^2 + 2x_3x_4 + x_3x_5 - x_4^2 \\
                          & C \isom \Z/10\Z \times  \Z/20\Z \text{ and }  J_0(55)(\Q)/C \isom  0, \Z/ 2\Z  \text{ or }  (\Z/ 2\Z)^2 \\
                                \end{flalign*} 
                                     %Primes used in Mordell-Weil Sieve:  2,3, 7, 13, 17, 19, 23, 29, 31

                                     	\begin{tabular}{cccccc}
                                     		\hline
                                     		Name & $\T^2$ & Coordinates & $j$-invariant & CM by & $\Q$-curve\\
                                     		\hline \hline &&&&& \\
                                     		$P_1$ & -11& $(\frac{-\T-1}{2} ,  2,2, \frac{-\T+5}{2}, 1)$  & -32768 &  -11 & YES\\ [1ex]
                                     		$P_2$ & -19& $(\frac{\T-3}{2} ,  \frac{-\T+3}{2}, 2,  \frac{-T+1}{2}, 1)$ & -884736& -19  & YES\\[1ex]  
                                     		$P_3$ & -19 & $(-2, \frac{\T+3}{2}, \frac{-\T+5}{2}, 1,1)$ & -884736 & -19 & YES\\ [1ex]
                                     		$P_4$ & -159 & $(\frac{5\T - 9}{32} , 1/8 ,\frac{-5\T + 19}{32} , \frac{-5\T+27}{32} , 1)$ & $\frac {-306924645775\T - 1607809480031}{4096}$ & NO & YES\\[1ex]
                                     		$P_5$ & -159 & $(\frac{\T + 7}{2},-4,\frac{-\T-5}{2}, \frac{-\T+1}{2},1)$
                                     	  & $\frac{2595124295410999055\T - 214625676177684264671}{72057594037927936}$  &  NO & YES \\[1ex]
                                     	    
                                     	\hline
                                     	\end{tabular}
                                     	\vspace{0.2cm}
                                     	\begin{tabular}{ccc}
                                     		\xymatrix{
                                     		                       		{P_1}\ar@{-}@(ul,ur)^{w_{11}}  \ar@{=}[rr]^{w_{5}, w_{52}} & & {P_1^\sigma}\ar@{-}@(ul,ur)^{w_4}}  &
                                     		
                                     		\xymatrix{
                                     		{P_2} \ar@{-}[d]_{w_{55}} \ar@{-}[r]^{w_{5}} \ar@{-}[dr] & {P_3}\ar@{-}[d]\ar@{-}[dl]_{w_{11}\;\;\;}
                                     			\\ {P_2^\sigma}   \ar@{-}[r] & {P_3^\sigma}  }  &
                                     			
                                     			\xymatrix{
                                     		        		{P_4} \ar@{-}[d]_{w_{11}} \ar@{-}[r]^{w_{55}} \ar@{-}[dr]& {P_5}\ar@{-}[d]\ar@{-}[dl]_{w_{5}\;\;\;} 
                                     		        			\\ {P_4^\sigma}   \ar@{-}[r] & {P_5^\sigma}  } 
                                     	    \\                 
                                     		
                                     	\end{tabular} 
                                     \end{table}
                                  
        %%%%%%%%%%%%%%%%%%%%%%%%%%%%%%%%%%%%%%%%%%%%%%%%%%%%%%%%%%%%%%%%%%%%%%%%%%%%%%%% 

                                      \begin{table}[!htbp]
                                      	\caption{\boldmath $X_0(56)$}
                                      	   	\begin{flalign*} 
                                      	   	&\text{Genus: }5 \\
                                      & \text{Model: } x_1x_3 - x_2^2 - x_3^2 + x_3x_5 - 3x_4^2,\\
                                      	& x_1x_4 - x_2x_3 + x_2x_5 - x_3x_4, \\
                                      	& x_1x_5 - x_2x_4 - x_3^2 + 2x_3x_5 - x_4^2 - x_5^2 \\
                              & J_0(56)(\Q ) = C \isom \Z/2\Z \times \Z/ 6\Z  \times \Z/ 6\Z \times  \Z/24\Z \\
                              \end{flalign*}

                                      	%Primes used in Mordell-Weil Sieve: 3, 5, 11, 13

                                      	\begin{tabular}{ c ccccc}
                                      		\hline
                                      		\thead{Name} & \thead{$\T^2$} & \thead{Coordinates} & \thead{$j$-invariant} & \thead{CM by} & \thead{$\Q$-curve}\\
                                      		\hline \hline &&&&&\\
                                      		$P_1$ & -7& $(-1, \frac{3\T+7}{8} ,  \frac{-\T+3}{4}, \frac{\T-3}{8}, 1)$  & -3375 &  -7 & NO\\[1ex]
                                      		$P_2$ & -7& $(-1, \frac{-\T+1}{2} ,  \frac{\T+1}{2}, -1,  1)$ & 16581375 & -28 & NO\\[1ex]  
                                      		$P_3$ & -7 & $(-1, \frac{-3\T-7}{8}, \frac{-\T+3}{4}, \frac{-\T+3}{8},1)$ & 16581375 & -28 & YES\\ [1ex]
                                      		$P_4$ & -7 & $(-1, \frac{\T - 1}{2} , \frac{\T + 1}{2} , 1, 1)$ & -3375  & -7 & YES\\[1ex]

                                      	\hline
                                      	\end{tabular}
                                      	\vspace{0.2cm}
                                      	\begin{tabular}{cccc}
                                      		\xymatrix{
                                      		                       		{P_1}\ar@{-}@(ul,ur)^{w_{7}}  \ar@{=}[rr]^{w_{8}, w_{56}} & & {P_2^\sigma}\ar@{-}@(ul,ur)^{w_7}}  &
                                      		
                                   	\xymatrix{
                                                                  		                       		{P_2}\ar@{-}@(ul,ur)^{w_{7}}  \ar@{=}[rr]^{w_{8}, w_{56}} & & {P_1^\sigma}\ar@{-}@(ul,ur)^{w_7}}  &

                                          	\xymatrix{
                                   		{P_3}\ar@{-}@(ul,ur)^{w_{7}}  \ar@{=}[rr]^{w_{8}, w_{56}} & & {P_3^\sigma}\ar@{-}@(ul,ur)^{w_7}}  &  
                                      			
                                      				\xymatrix{
                                      			                               		                       		{P_4}\ar@{-}@(ul,ur)^{w_{7}}  \ar@{=}[rr]^{w_{8}, w_{56}} & & {P_4^\sigma}\ar@{-}@(ul,ur)^{w_7}}  
                                      	    \\                 
                                      		
                                      	\end{tabular} 
                                      \end{table}
                                   
         %%%%%%%%%%%%%%%%%%%%%%%%%%%%%%%%%%%%%%%%%%%%%%%%%%%%%%%%%%%%%%%%%%%%%%%%%%%%%%%% 
            \begin{table}[!htbp]
                                         	\caption{\boldmath $X_0(63)$}
                                         	\vspace{-1cm}
                                         	\begin{flalign*}
                                        &  \text{Genus: } 5 \\
                                       &  \text{Model: } x_1x_3 - x_2^2 + x_2x_5 - x_3x_4 - x_5^2,\\
                                         	& x_1x_4 - x_2x_3 - x_3x_5,\\
                                        & x_1x_5 - x_2x_4 - x_3^2 \\
                                  & C \isom \Z/2\Z \times  \Z/4\Z \times  \Z/48\Z \text{ and }  J_0(63)(\Q)/C  \isom 0 \text{ or }  \Z/ 2\Z\\
                                  	\end{flalign*} 
                                         	
                                         	\begin{tabular}{cccccc}
                                         		\hline
                                         		Name & $\T^2$ & Coordinates & $j$-invariant & CM by & $\Q$-curve\\
                                         		\hline \hline &&&&&\\
                                         		$P_1$ & -3 & $\left(\frac{-\T+1}{2}, \frac{-\T-1}{2}, \frac{\T-1}{2}, \frac{\T-1}{2},1\right )$ & 0 &  -3 & YES\\[1ex]
                                         		$P_2$ & -3&  $\left (\frac{\T+1}{2}, \frac{-\T-1}{2}, 1,  \frac{-\T-1}{2} ,1 \right )$ & -12288000 & -27  & YES\\[1ex]  
                                         		$P_3$ & -3 &  $\left (-1, \frac{\T-1}{2}, \frac{\T-1}{2}, 1 ,1 \right )$ &  -12288000 & -27 & YES\\ [1ex]
                                         		$P_4$ & -3 & $(0, \frac{-\T+1}{2}, 0,0, 1)$ & -12288000 & -27 & YES\\[1ex]

                                         	\hline
                                         	\end{tabular}
                                         	\vspace{0.2cm}
                                         	\begin{tabular}{cc}
                                         
                                         \xymatrix{
                                                                         		{P_1} \ar@{-}[d]_{w_{7}} \ar@{-}[r]^{w_{63}} \ar@{-}[dr] & {P_4}\ar@{-}[d]\ar@{-}[dl]_{w_{9}\;\;\;}
                                                                         			\\ {P_1^\sigma}   \ar@{-}[r] & {P_4^\sigma}  } 
                                                                         			
                                                                         				  &
                                         		
                                         		\xymatrix{
                                         		{P_2} \ar@{-}[d]_{w_{63}} \ar@{-}[r]^{w_{7}} \ar@{-}[dr] & {P_3}\ar@{-}[d]\ar@{-}[dl]_{w_{9}\;\;\;}
                                         			\\ {P_2^\sigma}   \ar@{-}[r] & {P_3^\sigma}  }  \\

                                         	\end{tabular} 
                                         \end{table}

             %%%%%%%%%%%%%%%%%%%%%%%%%%%%%%%%%%%%%%%%%%%%%%%%%%%%%%%%%%%%%%%%%%%%%%%%%%%%%%%% 

\begin{table}[!htbp]
	\caption{\boldmath$X_0(64)$}
	\vspace{-1cm}
	\begin{flalign*}
& \text{Genus: } 3 \\
& \text{Model: } x^3z + 4xz^3 - y^4 \\
& J_0(64)(\Q) =C\isom \Z/2\Z \times \Z/4\Z \times \Z/4\Z \\
\end{flalign*}

%Primes used in Mordell-Weil Sieve: 3, 5, 7, 11, 13, 17, 19, 23

	\begin{tabular}{cccccc}
			\hline
		Name & $\T^2$ & Coordinates & $j$-invariant & CM by & $\Q$-curve\\
		\hline \hline
		$P_1$ & -7& $(-\T-1,-2, 1)$ &  -3375 & -7 & NO\\
		$P_2$ & -7& $(-\T-1 , 2,1 )$ & 16581375& -28 & YES\\
		$P_3$ & -7 & $(\frac{-\T-1}{2},\frac{\T+1}{2}, 1)$ & -3375 & -7& YES\\ [1ex]  
		$P_4$ & -7 & $( \frac{\T-1}{2},\frac{\T-1}{2},1)$ & 16581375 & -28 & NO\\[1ex]
		\hline
	\end{tabular}
	
		\vspace{0.5cm}
		
	 \begin{tabular}{ccc}
		\xymatrix{
			{P_1} \ar@{-}[r]^{w_{64}}& {P_4^\sigma}  } &
		
    	\xymatrix{
				{P_2} \ar@{-}[r]^{w_{64}}& {P_2^\sigma}  } &
			
	   \xymatrix{
							{P_3} \ar@{-}[r]^{w_{64}}& {P_3^\sigma}  }  \\                 
		\end{tabular}  
	\end{table}

       %%%%%%%%%%%%%%%%%%%%%%%%%%%%%%%%%%%%%%%%%%%%%%%%%%%%%%%%%%%%%%%%%%%%%%%%%%%%%%%%

                                  \begin{table}[!htbp]
                                  	\caption{\boldmath $X_0(72)$}
                                  	    \vspace{-1cm}
                                  	    \begin{flalign*}
                                  	    & \text{Genus: } 5\\
                                  &	\text{Model: } x_1x_3 - x_2^2 - x_4^2 - 4x_5^2,\\
                         & x_1x_4 - x_2x_3 + x_2x_5 + x_4x_5 - 3x_5^2, \\
                                  	& x_1x_5 - x_2x_4 - x_3x_5\\
                          & C \isom \Z/2\Z \times  \Z/4\Z \times  \Z/12\Z \times  \Z/12\Z  \text{ and } 
                                   J_0(72)(\Q )/C \isom 0 \; \text{ or } \; \Z/2\Z  \\
                                   \end{flalign*}
                                 \vspace{-1cm}
                               
                                 % Primes used in Mordell-Weil Sieve: 5, 7, 11
                             
                             There are no non-cuspidal quadratic points on $X_0(72)$.
                             \end{table}
              %%%%%%%%%%%%%%%%%%%%%%%%%%%%%%%%%%%%%%%%%%%%%%%%%%%%%%%%%%%%%%%%%%%%%%%%%%%%%%%% 

                                            \begin{table}[!htbp]
                                            	\caption{\boldmath $X_0(75)$}
                                            	       Genus: 5   
                                            	       
                                            	Model: $x_1x_3 - x_2^2 + x_2x_5 - x_3x_4 - x_5^2$,
                                            	
                                            	\hspace{1.2cm} $x_1x_4 - x_2x_3 - x_3x_5$,
                                            	
                                            	\hspace{1.2cm} $x_1x_5 - x_2x_4 - x_3^2$
                                            	
                                         $C \isom \Z/2\Z \times  \Z/4\Z \times \Z/40\Z$ and $J_0(75)(\Q )/C\isom 0$ or  $\Z/2\Z$ or $\Z/4\Z$\\

                                                                        %Primes used in Mordell-Weil Sieve: 2, 7, 11, 13, 17, 19, 23, 29, 31

                                            	\begin{tabular}{cccccc}
                                            		\hline
                                            		Name & $\T^2$ & Coordinates & $j$-invariant & CM by & $\Q$-curve\\
                                            		\hline \hline &&&&&\\
                                            		$P_1$ & 5 & $\left (\frac{-3\T + 1}{2} , \frac{\T-3}{2}, \frac{-3\T-1}{2}, \frac{\T-3}{2}, 1\right ) $ & $146329141248\T - 327201914880$ &  -75 & YES\\[1ex]
                                            		$P_2$ & -11&  $\left (-2 , \frac{t - 1}{2},  \frac{t + 1}{2} , \frac{t - 1}{2} , 1\right )$ & -12288000 & -11  & YES\\[1ex]  
                                            		$P_3$ & -11 &  $\left (\frac{-\T- 1}{2} , \frac{\T -  1}{2} , 2 ,\frac{\T - 1}{2} , 1\right ) $&  -12288000 & -11 & YES\\ [1ex]

                                            	\hline
                                            	\end{tabular}
                                            	\vspace{0.2cm}
                                            	\begin{tabular}{cc}
                                            
                                    	\xymatrix{
                                                                		{P_1}\ar@{-}@(ul,ur)^{w_{75}}  \ar@{=}[rr]^{w_{3}, w_{25}} & & {P_1^\sigma}\ar@{-}@(ul,ur)^{w_{75}}}

                                                                            				  &
                                            		
                                            		\xymatrix{
                                            		{P_2} \ar@{-}[d]_{w_{75}} \ar@{-}[r]^{w_{3}} \ar@{-}[dr]& {P_3}\ar@{-}[d]\ar@{-}[dl]_{w_{25}\;\;\;} 
                                            			\\ {P_2^\sigma}   \ar@{-}[r] & {P_3^\sigma}  }  \\

                                            	\end{tabular} 
                                            \end{table}

  %%%%%%%%%%%%%%%%%%%%%%%%%%%%%%%%%%%%%%%%%%%%%%%%%%%%%%%%%%%%%%%%%%%%%%%%%%%%%%%% 

 \begin{table}[!htbp]
                             \caption{\boldmath $X_0(81)$}
                                     \begin{flalign*}
                                   &  \text{Genus: } 4 \\
                           & \text{Model: } x_1^2x_4 - x_1x_4^2 - x_2^3 - 3x_3^3 + x_4^3,\\
                             & x_1x_3 - x_2^2 - 2x_3x_4 \\
                            & J_0(81)(\Q) = C \isom \mathbb Z/ 3 \Z \times \Z/ 9\Z\\
                            \end{flalign*}

                              %Primes used in Mordell-Weil Sieve: 2, 5, 7, 11, 13, 17, 19, 23

                             \begin{tabular}{cccccc}
                             \hline
                             Name & $\T^2 $& Coordinates & $j$-Invariant & CM by & $\Q$-curve\\
                             \hline
                             $P_1$ & -2& $(\frac{-\T}{2}, \frac{\T-1}{2} , \frac{\T}{2}, ,1)$ & 8000 & -8 & YES\\
                               $P_2$ & -11& $(\frac{-\T-1}{2}, \frac{-\T+1}{2} , 1,1)$ & -32768 & -11 & YES\\
                             \hline
                             \end{tabular}
               
                 \vspace{0.2cm}
             
             \begin{tabular}{cc}
                             
                                   \xymatrix{
                                     {P_1}\ar@{-}[r]^{w_{81}}& {P_1^\sigma}  } 
                                     
                                     &
                                     
                                     \xymatrix{
                                                                          {P_2}\ar@{-}[r]^{w_{81}}& {P_2^\sigma}  }
                                      \\                 
                              
                      \end{tabular}           
          \end{table}

\end{document}